\newtheorem{theorem}{Theorem}[section]
\newtheorem{corollary}[theorem]{Corollary}
\newtheorem{lemma}[theorem]{Lemma}
\newtheorem{proposition}[theorem]{Proposition}
\newtheorem{remark}[theorem]{Remark}
\def\J#1#2#3{ \left\{ #1,#2,#3 \right\} }
\def\NN{{\mathbb{N}}}
\def\11{\textbf{$1$}}
\def\11b#1{\mathbf{1}_{_{#1}}}
\begin{document}

\title[Surjective isometries between sets of invertible elements]{Surjective isometries between sets of invertible elements in unital Jordan-Banach algebras}

\author[A.M. Peralta]{Antonio M. Peralta}

\address[A.M. Peralta]{Departamento de An{\'a}lisis Matem{\'a}tico, Facultad de
Ciencias, Universidad de Granada, 18071 Granada, Spain.}
\email{aperalta@ugr.es}


\subjclass[2010]{Primary 47B48; 47B49; 46A22; 46H70, Secondary 46B04; 46L05; 17C65}

\keywords{(real-linear) isometry; Jordan $^*$-isomorphism, invertible elements, Jordan-Banach algebra, JB$^*$-algebra, extension of isometries}

\date{November 16th, 2020}

\begin{abstract} Let $M$ and $N$ be unital Jordan-Banach algebras, and let $M^{-1}$ and $N^{-1}$ denote the sets of invertible elements in $M$ and $N$, respectively. Suppose that $\mathfrak{M}\subseteq M^{-1}$ and $\mathfrak{N}\subseteq N^{-1}$ are clopen subsets of $M^{-1}$ and $N^{-1}$, respectively, which are closed for powers, inverses and products of the form $U_{a} (b)$. In this paper we prove that for each surjective isometry $\Delta : \mathfrak{M}\to \mathfrak{N}$ there exists a surjective real-linear isometry $T_0: M\to N$ and an element $u_0$ in the McCrimmon radical of $N$ such that $\Delta (a) = T_0(a) +u_0$ for all $a\in \mathfrak{M}$.\smallskip

Assuming that $M$ and $N$ are unital JB$^*$-algebras we establish that for each surjective isometry $\Delta : \mathfrak{M}\to \mathfrak{N}$ the element $\Delta(\textbf{1}) =u$ is a unitary element in $N$ and there exist a central projection $p\in M$ and a complex-linear Jordan $^*$-isomorphism $J$ from $M$ onto the $u^*$-homotope $N_{u^*}$ such that $$\Delta (a) = J(p\circ a) + J ((\textbf{1}-p) \circ a^*),$$ for all $a\in \mathfrak{M}$. Under the additional hypothesis that there is a unitary element $\omega_0$ in $N$ satisfying $U_{\omega_0} (\Delta(\textbf{1})) = \textbf{1}$, we show the existence of a central projection $p\in M$ and a complex-linear Jordan $^*$-isomorphism $\Phi$ from $M$ onto $N$ such that $$\Delta (a) = U_{w_0^{*}} \left(\Phi (p\circ a) + \Phi ((\textbf{1}-p) \circ a^*)\right),$$ for all $a\in \mathfrak{M}$.
\end{abstract}

\maketitle
\thispagestyle{empty}

\section{Introduction}

The topological properties of the invertible subgroup, $A^{-1}$, of an associative unital Banach algebra $A$ are not enough to distinguish $A$ from another associative unital Banach algebra. Actually, there are examples of associative unital complex Banach algebras $A$ and $B$ which are not isomorphic as real algebras while the invertible groups $A^{-1}$ and $B^{-1}$ are homeomorphically isomorphic as topological groups (see \cite[Remark I.7.8]{Zelaz73}). After the aforementioned counterexample it was clear that, in order to identify two associative unital Banach algebras by a bijective mapping between their invertible groups, we must assume additional properties on this bijection. A very natural hypothesis is to assume that this bijection preserves the distances induced by the norms of the Banach algebras. This was first confirmed by O. Hatori who proved in \cite{Hat09} that, for each surjective isometry $\Delta$ from an open subgroup of the group of invertible elements in an associative unital semisimple commutative Banach algebra $A$ onto an open subgroup of the group of invertible elements in an associative unital Banach algebra $B$, the mapping $\Delta(\textbf{1})^{-1} \Delta$ is an isometric group isomorphism. Consequently, $\Delta(\textbf{1})^{-1} \Delta$ extends to an isometric real-linear algebra isomorphism from $A$ onto $B$.\smallskip

In the non-commutative setting, let $A$ and $B$ be associative unital Banach algebras and suppose that $\mathfrak{A}$ and $\mathfrak{B}$ are open multiplicative subgroups of $A^{-1}$ and $B^{-1}$, respectively. Another remarkable result, also due to O. Hatori, asserts that for each surjective isometry $\Delta: \mathfrak{A}\to \mathfrak{B}$ there exist a surjective real-linear isometry $T_0$ from $A$ onto $B$ and an element $u_0$ in the Jacobson radical of $B$ for which the identity $$\Delta (a) = T_0 (a)+ u_0$$ holds for every $a \in \mathfrak{A}$ (see \cite[Theorem 3.2]{Hat2011}). O. Hatori and K. Watanabe concretized this description in the most favorable case of surjective isometries between open multiplicative subgroups of the invertible elements of two unital C$^*$-algebras (see \cite{HatWan2012}). The result reads as follows:

\begin{theorem}\label{t HatWat}{\rm(\cite[Theorem 2.2]{HatWan2012})} Let $A$ and $B$ be unital C$^*$-algebras, and let $\mathfrak{A}$ and $\mathfrak{B}$ be open subgroups of $A^{-1}$ and $B^{-1},$ respectively. Suppose $\Delta$ is a bijection from $\mathfrak{A}$ onto $\mathfrak{B}$.
Then $\Delta$ is an isometry if and only if $\Delta(\textbf{1})$ is unitary in $B$ and there are a central projection $p$ in $B$, and a complex-linear Jordan $^*$-isomorphism $J$ from $A$ onto $B$ such that $$ \Delta (a) = \Delta(\textbf{1}) p J(a) + \Delta(\textbf{1}) (\textbf{1}-p) J(a)^*, \hbox{ for all } a \in  \mathfrak{A}.$$ Furthermore the operator $ \Delta(\textbf{1}) p J(\cdot) + \Delta(\textbf{1}) (\textbf{1}-p) J(\cdot)^*$ defines a surjective real-linear isometry from $A$ onto $B$.
\end{theorem}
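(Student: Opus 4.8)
The plan is to prove the two implications separately, the ``if'' part being essentially a direct verification and the ``only if'' part carrying all the content.

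\emph{The ``if'' direction and the ``furthermore'' assertion.} Suppose $\Delta$ has the stated form, with $u:=\Delta(\textbf{1})$ unitary in $B$, $p$ a central projection in $B$, and $J\colon A\to B$ a complex-linear Jordan $^*$-isomorphism. Since $p$ is central, $B$ decomposes as the $\ell_\infty$-sum of the closed (self-adjoint) ideals $pB$ and $(\textbf{1}-p)B$, so the map $y\mapsto py+(\textbf{1}-p)y^{*}$ acts as the identity on $pB$ and as the involution on $(\textbf{1}-p)B$, whence it is a bijective real-linear isometry of $B$, because $\|py+(\textbf{1}-p)y^{*}\|=\max\{\|py\|,\|(\textbf{1}-p)y\|\}=\|y\|$. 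Composing it with the isometric Jordan $^*$-isomorphism $J$ on the right and with the (isometric, bijective) left multiplication by the unitary $u$ on the left, we obtain that $x\mapsto up\,J(x)+u(\textbf{1}-p)J(x)^{*}$ is a surjective real-linear isometry from $A$ onto $B$. As $\Delta$ is assumed to be a bijection from $\mathfrak A$ onto $\mathfrak B$ and coincides with this map on $\mathfrak A$, it is in particular an isometry, and the ``furthermore'' clause is also established.

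\emph{The ``only if'' direction.} Assume $\Delta\colon\mathfrak A\to\mathfrak B$ is a surjective isometry. First I would invoke the general non-commutative theorem of Hatori recalled above (\cite[Theorem 3.2]{Hat2011}): there exist a surjective real-linear isometry $T_0\colon A\to B$ and an element $u_0$ in the Jacobson radical of $B$ with $\Delta(a)=T_0(a)+u_0$ for all $a\in\mathfrak A$. Since C$^*$-algebras are semisimple, $u_0=0$; thus $\Delta$ is the restriction to $\mathfrak A$ of a surjective real-linear isometry $T_0$, and $u:=\Delta(\textbf{1})=T_0(\textbf{1})\in\mathfrak B\subseteq B^{-1}$. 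Next, regarding $A$ and $B$ as JB$^*$-triples, the real-linear Banach--Stone theorem for JB$^*$-triples shows that $T_0$ is a triple isomorphism up to conjugation, hence it maps complete tripotents to complete tripotents; since in a unital C$^*$-algebra the complete tripotents are exactly the unitaries, $u$ is unitary. Therefore $T:=u^{*}T_0$ is again a surjective real-linear isometry $A\to B$, and it is now \emph{unital}: $T(\textbf{1})=u^{*}u=\textbf{1}$. Finally I would apply the real-linear refinement of Kadison's isometry theorem for unital C$^*$-algebras: a unital surjective real-linear isometry splits, along a central projection $q\in A$, as a complex-linear Jordan $^*$-isomorphism on $qA$ plus a conjugate-linear Jordan $^*$-isomorphism on $(\textbf{1}-q)A$; equivalently, there is a complex-linear Jordan $^*$-isomorphism $J\colon A\to B$ and a central projection $p:=J(q)\in B$ with $T(a)=p\,J(a)+(\textbf{1}-p)J(a)^{*}$ for every $a\in A$ (using that $J$ is a $^*$-map and that the Jordan product with a central projection is the ordinary product). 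Multiplying back by $u$ gives $\Delta(a)=T_0(a)=uT(a)=up\,J(a)+u(\textbf{1}-p)J(a)^{*}$ for all $a\in\mathfrak A$, which is the asserted formula.

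\emph{Main obstacle.} The two genuinely hard inputs are the structural results for real-linear isometries: that a surjective real-linear isometry between JB$^*$-triples is a triple isomorphism up to conjugation (needed to conclude that $\Delta(\textbf{1})$ is unitary), and the real-linear version of Kadison's theorem describing unital surjective real-linear isometries between unital C$^*$-algebras as the sum of a complex-linear and a conjugate-linear Jordan $^*$-isomorphism carried on complementary central summands. The remaining steps are routine bookkeeping: the reduction through \cite[Theorem 3.2]{Hat2011}, the vanishing of the radical part by semisimplicity, transporting the central projection from $A$ to $B$ through $J$, and noting that the reassembled surjective real-linear isometry restricts on $\mathfrak A$ to the given bijection $\Delta$ onto $\mathfrak B$ automatically.
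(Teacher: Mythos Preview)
The paper does not supply its own proof of this theorem: it is quoted verbatim from \cite[Theorem~2.2]{HatWan2012} as background in the introduction, with no argument given. So there is nothing to compare against directly.

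That said, your proof is correct and closely mirrors the strategy the paper uses for its JB$^*$-generalization (Theorem~\ref{t surjective isometries between invertible clopen JBstar}). Both reduce to a surjective real-linear isometry $T_0$ via Hatori's theorem (the paper uses its own Jordan version, you use the original associative one), kill the radical term by semisimplicity, invoke the real-linear Banach--Stone/Dang theorem for JB$^*$-triples to see that $T_0$ is a real-linear triple isomorphism and hence $T_0(\textbf{1})$ is unitary, and then split into complex-linear and conjugate-linear parts along a central projection. The one substantive difference is in how the unit is normalized: the paper passes to the $u^*$-homotope $N_{u^*}$ (the only device available in the Jordan setting), whereas you use left multiplication by $u^*$, which is perfectly legitimate in a C$^*$-algebra and yields the formula in the precise associative form stated. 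Your ``if'' direction and ``furthermore'' clause are straightforward verifications that the paper does not spell out at all.
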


O. Hatori and L. Moln{\'a}r considered subtle variants of the above problem on non-linear preservers (see \cite{HatMol2012,HatMol2014}). These authors achieved fascinating conclusions by showing, for example, that for each surjective isometry $\Delta$ between the unitary groups $\mathcal{U} (A)$ and $\mathcal{U} (B)$ of two unital C$^*$-algebras $A$ and $B$, then $\Delta$ maps the set $e^{i A_{sa}}$ onto the set $e^{i B_{sa}}$, and there exists a central projection $p \in B$ and a Jordan $^*$-isomorphism $J : A\to B$ satisfying $$\Delta (e^{ix}) = \Delta(1) (p J(e^{ix}) + (1-p) J(e^{ix})^*), \ \ (x\in A_{sa})$$ (cf. \cite[Theorem 1]{HatMol2014}). As a consequence, they also proved that every surjective isometry between the unitary groups of two von Neumann algebras admits an extension to a surjective real-linear isometry between these algebras (see \cite[Corollary 3]{HatMol2014}).\smallskip

There is no direct connection between the results on surjective isometries between open subgroups of invertible elements in unital C$^*$-algebras $A$ and $B$ in \cite{Hat2011,HatWan2012} and those about surjective isometries between the unitary groups of $A$ and $B$ in \cite{HatMol2014}, none of them seems to be deducible from the other. Despite in the case of von Neumann algebras both results lead to the same conclusion.\smallskip

The reader has probably realized the strong connections of the above commented result with the Jordan structure underlying associative Banach algebras and C$^*$-algebras. It was I. Kaplansky who first suggested a definition of  a  suitable  Jordan  analogue  of  C$^*$-algebras, which  later materialized in the notion of JB$^*$-algebra (see sections \ref{section McCrimmon radical} and \ref{sec: JBstar algebras} for the concrete notions and basic references). At this stage we shall simply recall that a Jordan-Banach algebra is a (non-necessarily associative) algebra $M$ whose product is abelian and satisfies the so-called \emph{Jordan identity}: $(a \circ b)\circ a^2 = a\circ (b \circ a^2)$ ($a,b\in M$), and is equipped with a complete norm, $\|.\|$, satisfying $\|a\circ b\| \leq \|a\| \ \|b\|$ ($a,b\in M$). If $M$ is unital with unit $\textbf{1}$ we also assume $\|\textbf{1}\|=1$. Every associative Banach algebra $A$ is a Jordan-Banach algebra with respect to the natural Jordan product given by $a\circ b = \frac12 (a b +ba )$. It is worth to note that, even in the case of associative Banach algebras, invertible elements are not stable under Jordan products, what constitutes a real handicap when working with Jordan structures. However, an expression of the form $U_a (b) = 2 (a\circ b) \circ a - a^2 \circ b$ defines and invertible element in $M$ if and only if $a$ and $b$ are invertible (see \cite[Theorem 4.1.3]{Cabrera-Rodriguez-vol1}). A \emph{JB$^*$-algebra} is a complex Jordan-Banach algebra $M$ equipped with an algebra involution $^*$ satisfying  $\|U_{a} (a^*) \|= \|a\|^3$, $a\in M$.\smallskip

The category of C$^*$-algebras is a strict subclass of that given by all JB$^*$-algebras. In a recent collaboration with M. Cueto-Avellaneda we generalized the mentioned Hatori-Moln{\'a}r theorem to the setting of unital JB$^*$-algebras, and we prove, among other things, that every surjective isometry between the sets of unitaries of two unital JB$^*$-algebras admits a (unique) extension to a surjective real-linear isometry between them (see \cite[Theorem 3.9]{CuPe20a}). In the category of Jordan algebras, we lack of a good description of those surjective isometries between certain subsets of invertible elements in two unital Jordan-Banach algebras. This paper is aimed to throw some new light to those questions which remains unsolved in the setting of Jordan-Banach algebras.\smallskip

Let $M$ and $N$ be unital Jordan-Banach algebras, and let $M^{-1}$ and $N^{-1}$ denote the sets of invertible elements in $M$ and $N$, respectively. Suppose that $\mathfrak{M}\subseteq M^{-1}$ and $\mathfrak{N}\subseteq N^{-1}$ are clopen subsets of $M^{-1}$ and $N^{-1}$, respectively, which are closed for powers, inverses and expressions of the form $U_a(b)$. In Theorem \ref{t surjective isometries between invertible clopen} we establish that for each surjective isometry $\Delta : \mathfrak{M}\to \mathfrak{N}$, there exist a surjective real-linear isometry $T_0: M\to N$ and an element $u_0$ in the McCrimmon radical of $N$ such that $\Delta (a) = T_0(a) +u_0$ for all $a\in \mathfrak{M}$. That is, every surjective isometry between certain subsets of invertible elements in unital Jordan-Banach algebras can be extended to a surjective real-linear isometry between these algebras up to a translation by an element in the McCrimmon radical.\smallskip

JB$^*$-algebras constitute a subclass of complex Jordan-Banach algebras, which is the best known and studied. In Jordan theory, JB$^*$-algebras play the role of C$^*$-algebras in the category of associative Banach algebras. We shall devote our final section \ref{sec: JBstar algebras} to prove a more concrete description of those surjective isometries between certain subsets of invertible elements in two unital JB$^*$-algebras $M$ and $N$. If we assume that $\mathfrak{M}\subseteq M^{-1}$ and $\mathfrak{N}\subseteq N^{-1}$ are clopen subsets of $M^{-1}$ and $N^{-1}$, respectively, which are closed for powers, inverses and expressions of the form $U_a(b)$, and $\Delta : \mathfrak{M}\to \mathfrak{N}$ is a surjective isometry, we show that $\Delta(\textbf{1}) =u$ is a unitary element in $N$ and there exist a central projection $p\in M$ and a complex-linear Jordan $^*$-isomorphism $J$ from $M$ onto the $u^*$-homotope $N_{u^*}$ such that $$\Delta (a) = J(p\circ a) + J ((\textbf{1}-p) \circ a^*),$$ for all $a\in \mathfrak{M}$. If we additionally suppose that there exists a unitary $\omega_0$ in $N$ such that the identity $U_{\omega_0} (\Delta(\textbf{1})) = \textbf{1}$ holds, then there exist a central projection $p\in M$ and a complex-linear Jordan $^*$-isomorphism $\Phi$ from $M$ onto $N$ such that $$\Delta (a) = U_{w_0^{*}} \left(\Phi (p\circ a) + \Phi ((\textbf{1}-p) \circ a^*)\right),$$ for all $a\in \mathfrak{M}$ (see Theorem \ref{t surjective isometries between invertible clopen JBstar}).\smallskip

It should be observed here that not only the results possess their own interest and independence from the previous contributions in the associative setting, but we also present a completely new strategy based on a better understanding of these kind of surjective isometries on particular convex subsets to apply a celebrated result by P. Mankiewicz in \cite{Mank1972} (cf. Lemma \ref{l affine on segements}). That is, the proofs here also provide a new independent, and perhaps shorter, strategy to rediscover Hatori's theorem in \cite{Hat2011}, where the arguments are based on a technical result inspired by the new proof of the Mazur--Ulam theorem given by V\"{a}is\"{a}l\"{a} in \cite{Vaisala2003}.\smallskip

In order to conclude the description of this note's structure, we add that section \ref{section McCrimmon radical} is devoted to present the McCrimmon radical of a Jordan-Banach algebra together with its main properties and characterizations.

\section{McCrimmon radical in Jordan-Banach algebras}\label{section McCrimmon radical}

During the thirties of the twentieth century, P. Jordan, J. von Neumann, E. Wigner and some other authors introduced the notion of Jordan algebra as a mathematical model for quantum mechanics. A complex (respectively,  real) \emph{Jordan algebra} $M$ is a (non-necessarily associative) algebra over the complex (respectively, real) field whose product is abelian and satisfies the so-called \emph{Jordan identity}: $(a \circ b)\circ a^2 = a\circ (b \circ a^2)$ ($a,b\in M$). A \emph{normed Jordan algebra} is a Jordan algebra $M$ equipped with a norm, $\|.\|$, satisfying $\|a\circ b\| \leq \|a\| \ \|b\|$ ($a,b\in M$). A \emph{Jordan-Banach algebra} is a normed Jordan algebra $M$ whose norm is complete. If $M$ is unital with unit $\textbf{1}$, we also require $\|\textbf{1}\|=1$. In the case that $M$ does not possess a unit, it can be always embedded in a unital Jordan-Banach algebra. Every real or complex associative Banach algebra is a real or complex Jordan-Banach algebra with respect to the Jordan product $a\circ b: = \frac12 (a b +ba)$; Jordan-Banach algebras obtained in this way are called \emph{special}. We shall always asume that our Jordan-Banach algebra is unital. A Jordan homomorphism between Jordan Banach algebras is a linear map preserving Jordan products (equivalently, squares of elements). A real-linear Jordan homomorphism is a real-linear mapping preserving Jordan products. We shall follow classical notation like in \cite{McCrimm66,McCrimm69,McCrimm71,HogbMcCrimm81,Aupetit93}--the reader should be warned that our notation might slightly differ from the one employed in the recent monograph \cite{Cabrera-Rodriguez-vol1}, where for example, the McCrimmon radical is called the Jacobson radical.\smallskip

Let $M$ be a Jordan Banach algebra. Two key notions to work with a Jordan algebra $M$ are the (Jordan) multiplication operator by an element $a\in M$ (denoted by $L_a$) defined by $L_a (b)= a \circ b$ ($b\in M$), and the $U_a$ operator given by $$U_{a} (x) = (2 L_a^2 -L_{a^2}) (x) = 2 (a\circ x) \circ a  - a^2\circ x \ \  (x\in M).$$ Furthermore, given $a,b\in M$, we shall write $U_{a,b}:M\to M$ for the (complex-)linear mapping on $M$ defined by $$U_{a,b} (x)=(a\circ x) \circ b + (b\circ x)\circ a - (a\circ b)\circ x,\ \ (x\in M).$$ One of the main identities in Jordan algebras assures that \begin{equation}\label{eq fundamental identity UaUbUa} \ \  U_a U_b U_a = U_{U_a(b)}\ \ \ \hbox{("fundamental formula")}, \end{equation} for all $a,b$ in $M$ (cf. \cite[2.4.18]{HOS} or \cite{McCrimm66} or \cite[Proposition 3.4.15]{Cabrera-Rodriguez-vol1}).\smallskip

Henceforth, the powers of an element $a$ in a Jordan algebra $M$ will be denoted as follows:
$$ a^1 =a;\ \  a^{n+1} =a\circ a^n, \quad n\geq 1.$$
If $M$ is unital, we set $a^0=\textbf{1}$. An algebra $\mathcal{B}$ is called \emph{power associative} if the subalgebras generated by single elements of $\mathcal{B}$ are associative. In the case of a Jordan algebra $M$ this is equivalent to say that the identity $ a^m\circ a^n=a^{m+n},$ holds for all $a\in M$, $m,n\in \NN.$ It is known that any Jordan algebra is power associative (\cite[Lemma 2.4.5]{HOS}).
\smallskip

From a purely algebraic point of view, elements $a, b$ in a Jordan algebra $M$ are said to \emph{operator commute} if $$(a\circ c)\circ b= a\circ (c\circ b),$$ for all $c\in M$ (cf. \cite[4.2.4]{HOS}). The \emph{centre} of $M$ is, by definition, the set of all elements of $M$ which operator commute with any other element in $M$, and will be denoted by $Z(M)$. Elements in the centre of $M$ are called \emph{central}. \smallskip

An element $a$ in a unital Jordan Banach algebra $M$ is called \emph{invertible} if there exists $b\in M$ satisfying $a \circ b = \textbf{1}$ and $a^2 \circ b = a.$ The element $b$ is unique and it will be denoted by $a^{-1}$ (cf. \cite[3.2.9]{HOS} and \cite[Definition 4.1.2]{Cabrera-Rodriguez-vol1}). We know from \cite[Theorem 4.1.3]{Cabrera-Rodriguez-vol1} that an element $a\in M$ is invertible if and only if $U_a$ is a bijective mapping, and in such a case $U_a^{-1} = U_{a^{-1}}$. The set $M^{-1}$ of all invertible elements in $M$ is open (see \cite[Theorem 4.1.7]{Cabrera-Rodriguez-vol1}), and hence $M^{-1}$ is locally connected and its connected components are open. An element of the form $U_x (y)$ is invertible if and only if both $x$ and $y$ are (cf. \cite[Theorem 4.1.3$(vi)$]{Cabrera-Rodriguez-vol1}), and then $(U_x (y))^{-1} = U_x^{-1} (y^{-1})$; consequently, $(x^{-1})^2 = (x^2)^{-1} = x^{-2}$. The reader should be warned that the Jordan product of two invertible elements is not, in general, an invertible element. However, the square and the $n$-th power ($n\in \mathbb{N}$) of each invertible element is an invertible element.
\smallskip

The spectrum of an element $a$ of a Jordan-Banach algebra $M$, $Sp(a)$, is the set of all $\lambda\in \mathbb{C}$ for which $a-\lambda \textbf{1}$ is not invertible. As in the setting of associative Banach algebras, $Sp(a)$ is a non-empty compact subset of the complex plane (see \cite[Theorem 4.1.17]{Cabrera-Rodriguez-vol1} where even a Jordan version of the famous Gelfand-Beurling formula is stated). Moreover the mapping $a\mapsto Sp(a)$ is upper semi-continuous on $M$. The symbol $\rho(a)$ will stand for the spectral radius of $a$, that is, the greatest modulus of spectral values.\smallskip

An element $a$ in a unital Jordan-Banach algebra $M$ is \emph{quasi-invertible} with \emph{quasi-inverse} $b \in M$ if $\textbf{1} -a$ is invertible with inverse $\textbf{1} - b$. A subset of $M$ is called quasi-invertible if all its elements are quasi-invertible in $M$.\smallskip

An \emph{outer ideal} of a Jordan-Banach algebra $M$ is a subspace $J$ such that $U_{M} (J)\subseteq J$; an \emph{inner ideal} or a \emph{strict inner ideal} is a subspace $I$ satisfying $U_{I} (M) \subseteq I$ and $I^2 \subseteq I$ (if $M$ is unital the second condition clearly follows from the first one). An ideal $J$ is a subspace that is both an outer and a strict inner ideal (i.e. $J^2 \subseteq J$). It is known that if $J$ is an ideal or an inner ideal, then the quasi-inverse $w$ of any quasi-invertible element $z\in J$ also belongs to $J$ (see \cite[page 672]{McCrimm69}).\smallskip

Let us recall the notion of radical in the Jordan setting. For each Jordan-Banach algebra $M$ there exists a unique maximum quasi-invertible ideal $\mathcal{R}ad(M)$ of $M,$ called the \emph{McCrimmon radical} of $M$, which contains all quasi-invertible ideals (see \cite[Theorem 1]{McCrimm69}). In \cite{Cabrera-Rodriguez-vol1} and in the original paper by K. McCrimmon \cite{McCrimm69}, the McCrimmon radical is called the \emph{Jacobson radical} (cf. \cite[Proposition 4.4.11 and Definition 4.4.12]{Cabrera-Rodriguez-vol1}). If $M$ is a special Jordan-Banach algebra the McCrimmon radical coincides with the usual Jacobson radical (cf. \cite[Theorem 3.6.21]{Cabrera-Rodriguez-vol1}). L. Hogben and K. McCrimmon proved in \cite[Theorem 1.1]{HogbMcCrimm81} that the McCrimmon radical of a unital Jordan algebra coincides with the intersection of all maximal inner ideals. A Jordan-Banach algebra $M$ is called \emph{Jordan-semisimple} or simply \emph{J-semisimple} if $\mathcal{R}ad(M)= \{0\}$.\smallskip

B. Aupetit established in \cite{Aupetit93} several characterization of the Jacobson and McCrimmon radicals of associative Banach algebras and Jordan-Banach algebras, respectively. The next result has been borrowed from the just quoted reference.

\begin{theorem}\label{t Aupetit c 1}{\rm \cite[Theorem 2, Corollaries 1 and 2]{Aupetit93}} Let $a$ be an element of a Jordan-Banach algebra $M$. Then the following statements are equivalent:\begin{enumerate}[$(a)$]\item $a$ is in the McCrimmon radical of $M$;
\item $\sup\{\rho(x+\alpha a) : \alpha\in \mathbb{C}\}<\infty$, for every $x\in M$;
\item $\rho(U_x(a)) = 0,$ for every $x$ in $M$;
\item There exists $C\geq 0$ such that $\rho (x) \leq C \|x-a\|$ for all $x$ in a neighborhood of $a$.
\end{enumerate}
\end{theorem}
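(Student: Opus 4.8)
The engine of the proof is the subharmonicity of the spectral radius along analytic families, due to Vesentini in the associative case and extended by Aupetit to the Jordan-Banach setting: for every analytic map $f$ from a domain of $\CC$ into $M$, the function $\lambda\mapsto \rho(f(\lambda))$ is subharmonic. Combined with the Liouville theorem for subharmonic functions (a function subharmonic and bounded above on all of $\CC$ is constant), the homogeneity identity $\rho(\mu z)=|\mu|\,\rho(z)$, the upper semicontinuity of the spectrum, and the elementary bound $\rho(z)\leq\|z\|$, these are the only analytic facts I will use. On the algebraic side I will rely on two properties of the McCrimmon radical: first, that invertibility lifts modulo $\mathcal{R}ad(M)$, so that $Sp(z)=Sp(z+r)$ for every $z\in M$ and $r\in\mathcal{R}ad(M)$ (whence $\rho(r)=0$ for $r\in\mathcal{R}ad(M)$, and, $\mathcal{R}ad(M)$ being an outer ideal, $\rho(U_x(r))=0$ for all $x$); and second, McCrimmon's description of $\mathcal{R}ad(M)$ as the set of \emph{properly quasi-invertible} elements, i.e. those $a$ for which $U_x(a)$ is quasi-invertible for every $x\in M$ (\cite{McCrimm69}), together with the fundamental formula \eqref{eq fundamental identity UaUbUa}.

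The implications radiating from $(a)$ are then direct. For $(a)\Rightarrow(d)$, invertibility modulo the radical gives $\rho(x)=\rho(x-a)\leq\|x-a\|$ for every $x$, so $(d)$ holds globally with $C=1$. For $(a)\Rightarrow(c)$, the outer-ideal property forces $U_x(a)\in\mathcal{R}ad(M)$, whence $\rho(U_x(a))=0$. For $(a)\Rightarrow(b)$, $Sp(x+\alpha a)=Sp(x)$ gives $\sup_{\alpha}\rho(x+\alpha a)=\rho(x)<\infty$. The step $(d)\Rightarrow(b)$ is a homogeneity argument: writing $x+\alpha a=\alpha\,(a+\alpha^{-1}x)$ and applying $(d)$ at the point $a+\alpha^{-1}x$, which lies near $a$ once $|\alpha|$ is large, yields $\rho(x+\alpha a)=|\alpha|\,\rho(a+\alpha^{-1}x)\leq|\alpha|\,C\,\|\alpha^{-1}x\|=C\|x\|$ for large $|\alpha|$, while for $\alpha$ in a compact set $\rho(x+\alpha a)\leq\|x+\alpha a\|$ is bounded; hence $\sup_\alpha\rho(x+\alpha a)<\infty$. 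Finally $(c)\Leftrightarrow(a)$ follows from McCrimmon's theorem once one notes the scalar-absorption identity $\mu\,U_x(a)=U_{\sqrt{\mu}\,x}(a)$: indeed $\rho(U_x(a))=0$ for all $x$ says precisely that $\textbf{1}-\mu\,U_x(a)=\textbf{1}-U_{\sqrt\mu\,x}(a)$ is invertible for all $\mu$ and all $x$, i.e. that $a$ is properly quasi-invertible, which is equivalent to $a\in\mathcal{R}ad(M)$.

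It remains to close the cycle with one implication landing in $(a)$, say $(b)\Rightarrow(c)$, and this is where the real work lies. First I would upgrade $(b)$: for fixed $y$ the entire map $\alpha\mapsto\rho(y+\alpha a)$ is subharmonic and, by hypothesis, bounded above on $\CC$, so Liouville forces it to be constant, equal to its value $\rho(y)$ at $\alpha=0$. Combining this constancy with homogeneity yields the rigid identity $\rho(a+\mu y)=|\mu|\,\rho(y)$ for all $y\in M$ and $\mu\in\CC$; in particular $\rho(a+y)=\rho(y)$ for every $y$. The goal is then to promote this \emph{linear} absorption property of $a$ to the statement that $U_x(a)$ is quasi-invertible for every $x$. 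For invertible $x$ this is tractable through the fundamental formula $U_xU_aU_x=U_{U_x(a)}$ and the spectrum-preserving isomorphism that $U_x$ induces onto a homotope, which transports the absorption identity.

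The delicate point, and the one I expect to be the main obstacle, is the passage to arbitrary (non-invertible) $x$, since invertible elements need not be dense in $M$ and $U_x$ is then neither surjective nor spectrum-preserving. Here I would study, for fixed $x,y$, the subharmonic function $\mu\mapsto\rho\big(U_x(a)+\mu\,U_x(y)\big)=\rho(U_x(a+\mu y))$, extract from the upper semicontinuity of the spectrum the growth control $\limsup_{|\mu|\to\infty}|\mu|^{-1}\rho(U_x(a+\mu y))\leq\rho(U_x(y))$, and combine this Phragm\'en--Lindel\"of-type majorization with the rigid identity above, exactly as in Aupetit's subharmonic estimates for the variation of the spectrum, to force $\rho(U_x(a))=0$; this is $(c)$. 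Once $(c)$ is established for all $x$, the already proven implication $(c)\Rightarrow(a)$ closes all four equivalences.
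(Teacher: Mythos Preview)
The paper does not prove this theorem; it is quoted from \cite[Theorem~2, Corollaries~1 and~2]{Aupetit93} and invoked as a black box (the proof of Proposition~\ref{p Hatori-Aupetit Jordan}, for instance, appeals to the implication $(d)\Rightarrow(a)$). There is thus no in-paper argument to compare against, and your proposal is in effect a reconstruction of Aupetit's original proof.

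Your outline follows Aupetit's subharmonic strategy and the easy directions are handled correctly: $(a)\Rightarrow(b),(c),(d)$ via spectral permanence modulo the radical and the outer-ideal property, $(d)\Rightarrow(b)$ via homogeneity, $(c)\Rightarrow(a)$ via McCrimmon's properly-quasi-invertible description combined with $\mu\,U_x(a)=U_{\sqrt\mu\,x}(a)$, and the Liouville upgrade of $(b)$ to the rigid identity $\rho(x+\alpha a)=\rho(x)$. The genuine gap, which you yourself flag, is the closing step $(b)\Rightarrow(c)$ for arbitrary $x$. Your Phragm\'en--Lindel\"of control on $\mu\mapsto\rho\big(U_x(a)+\mu\,U_x(y)\big)$ yields only $\limsup_{|\mu|\to\infty}|\mu|^{-1}\rho\big(U_x(a)+\mu\,U_x(y)\big)\le\rho(U_x(y))$, which by itself does not force $\rho(U_x(a))=0$; the absorption identity $\rho(a+z)=\rho(z)$ lives upstairs in $M$, and you indicate no mechanism to transport it through a non-invertible, non-spectrum-preserving $U_x$. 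As written this is an outline of where the difficulty lies rather than a proof. Closing it requires either Aupetit's sharper subharmonic comparison lemmas for analytic spectral families, or a direct verification that $\{a:\rho(x+a)=\rho(x)\ \forall x\}$ is a quasi-invertible ideal of $M$, neither of which you supply.
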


Let $A$ be a unital associative Banach algebra with unit $\textbf{1}$, whose subgroup of invertible elements is denoted by $A^{-1}$. Let $A_{\textbf{1}}^{-1}$ denote the principal component of $A^{-1}$. The next result was proved by O. Hatori in \cite{Hat2011}.

\begin{lemma}\label{l Hatori associative}{\rm\cite[Lemma 3.1]{Hat2011}} Let $A$ be a unital (associative) Banach algebra and $a\in A$. Suppose that $\rho(ba) = 0$ for every $b$ in the principal component of $A^{-1}$. Then $a$ lies in the Jacobson radical of $A$.
\end{lemma}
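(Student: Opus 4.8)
The plan is to reduce the statement to a standard characterization of the Jacobson radical of a unital associative Banach algebra, namely that $a$ lies in $\mathrm{Rad}(A)$ if and only if $\rho(xa)=0$ for \emph{every} $x\in A$ (equivalently, $\mathbf{1}-xa$ is invertible for all $x$, i.e. $a$ is a left quasi-regular element whose left-multiples are all quasi-regular). The hypothesis only gives us $\rho(ba)=0$ for $b$ in the principal component $A_{\mathbf 1}^{-1}$, so the whole point is to upgrade ``for every $b$ in the principal component'' to ``for every $x$ in $A$''. First I would recall that the principal component $A_{\mathbf 1}^{-1}$ is precisely the subgroup of $A^{-1}$ generated by $\exp(A) = \{e^{x}: x\in A\}$; in particular every element of the form $e^{x}$ lies in $A_{\mathbf 1}^{-1}$, and $A_{\mathbf1}^{-1}$ is an open (hence also closed in $A^{-1}$) connected subgroup.

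The key step is a holomorphic/entire-function argument. Fix an arbitrary $x\in A$ and consider the entire function $\lambda \mapsto e^{\lambda x} \in A$; for each $\lambda\in\mathbb C$ the element $e^{\lambda x}$ belongs to the principal component, so by hypothesis $\rho\big(e^{\lambda x} a\big) = 0$ for all $\lambda\in\mathbb C$. I would then examine the entire $A$-valued map $f(\lambda) = e^{\lambda x} a$ and, for a fixed bounded linear functional $\varphi\in A^{*}$, the scalar entire function $g(\lambda) = \varphi\big(f(\lambda)\big)$. Differentiating, $f'(0) = x a$, so $xa$ is a ``derivative at $0$'' of a curve of elements all having spectral radius zero. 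Now invoke the subharmonicity of $\lambda\mapsto \rho\big(e^{\lambda x} a\big)$ together with the Vesentini theorem (the spectral radius of a holomorphic Banach-algebra-valued function is subharmonic): a nonnegative subharmonic function that vanishes identically is... trivially zero, which alone is not enough. So instead I would use the following sharper route: for any $x\in A$, the elements $\mathbf1 + \lambda x a$ need not be invertible a priori, but consider $b_\lambda = e^{\lambda x}\in A_{\mathbf1}^{-1}$ and expand. Since $\rho(b_\lambda a)=0$, the element $\mathbf 1 - b_\lambda a$ is invertible for every $\lambda$. Equivalently $\mathbf1 - \mu\, b_\lambda a$ is invertible for \emph{all} $\mu\in\mathbb C$ as well (replace $x$ by $x$ and rescale, or note $\rho(\mu b_\lambda a)=0$ directly since $\mu b_\lambda$ is still — no, $\mu b_\lambda$ need not be invertible). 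Cleaner: for each $\lambda$, $a$ is a ``topological left quasi-regular'' multiple, and I would apply the Zemánek / Aupetit characterization: $a\in\mathrm{Rad}(A)$ iff $\sup_{\|x\|\le 1}\rho(xa) = 0$ iff $\rho(xa)=0$ for all $x$. To pass from the principal component to all of $A$, use that $\mathrm{Rad}(A)$ is a closed two-sided ideal and that $A = \overline{\mathrm{span}}(A_{\mathbf1}^{-1})$: indeed every element of $A$ is a finite linear combination of exponentials (since $e^{0}=\mathbf 1$ and, for small $t$, $\frac{1}{t}(e^{tx}-\mathbf1)\to x$ with $e^{tx},\mathbf1\in A_{\mathbf1}^{-1}$), so $xa$ is a limit of linear combinations $\sum_j \alpha_j b_j a$ with $b_j\in A_{\mathbf1}^{-1}$.

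Concretely, the argument I would write runs: for fixed $x\in A$ and each $n\in\mathbb N$ put $b_n = e^{x/n}\in A_{\mathbf1}^{-1}$; then $c_n := n(b_n a - a) = n(e^{x/n}-\mathbf1)a \to xa$ in norm as $n\to\infty$. By hypothesis $\rho(b_n a)=0$ and $\rho(a)=0$ (take $b=\mathbf1$), hence $b_n a, a\in\mathrm{Rad}(A)$ provided we already know that ``spectral radius $0$ and a multiple of a principal-component element'' forces membership — which is exactly what we are trying to prove, so this is circular as stated. The honest fix: show directly that $\{y\in A : \rho(by)=0 \text{ for all } b\in A_{\mathbf1}^{-1}\}$ is a closed subspace of $A$ stable under multiplication by $A_{\mathbf1}^{-1}$, conclude it is stable under multiplication by $\overline{\mathrm{span}}\,A_{\mathbf1}^{-1} = A$, hence it is a set $\mathfrak J$ with $\rho(xy)=0$ for all $x\in A$, $y\in\mathfrak J$; such a set lies in $\mathrm{Rad}(A)$ by the standard characterization (if $\rho(xa)=0$ for all $x$ then in particular $\mathbf1-xa\in A^{-1}$ for all $x$, so the right ideal $aA$ consists of quasi-regular elements, whence $aA\subseteq\mathrm{Rad}(A)$ and $a\in\mathrm{Rad}(A)$). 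The subspace claim is the only nontrivial point: closedness follows from upper semicontinuity of the spectrum (if $y_k\to y$ and $\rho(by_k)=0$, then $by_k\to by$ and $\rho(by)\le\liminf\rho(by_k)=0$ by upper semicontinuity of the spectral radius, using that $\rho$ is actually continuous on elements of spectral radius $0$ — or just subharmonicity), and additivity/scalar homogeneity is where I'd use the Vesentini subharmonicity of $\mu\mapsto\rho\big(b(y_1+\mu y_2)\big)$: it is subharmonic, vanishes at every $\mu$ for which $b(y_1+\mu y_2)=b\tilde y$ with $\tilde y$ in the set... which again needs the set to be a subspace.

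Given this subtlety, the cleanest correct strategy — and the one I would actually commit to — bypasses linearity of the set entirely: it suffices to prove $\rho(xa)=0$ for every $x\in A$, and for this fix $x$, set $b_\lambda=e^{\lambda x}$, so $\rho(b_\lambda a)=0$ for all $\lambda\in\mathbb C$ by hypothesis, meaning $\mathbf1-\zeta\,b_\lambda a\in A^{-1}$ for all $\zeta,\lambda\in\mathbb C$. The function $h(\lambda,\zeta)=\log\|(\mathbf1-\zeta b_\lambda a)^{-1}\|$ and standard entire-function estimates (Liouville applied to $\zeta\mapsto \varphi((\mathbf1-\zeta b_\lambda a)^{-1})$ for each $\varphi\in A^*$, which is entire of order $0$) give that $\varphi((\mathbf1-\zeta b_\lambda a)^{-1})$ is constant in $\zeta$, hence $(\mathbf1-\zeta b_\lambda a)^{-1}=\mathbf1$, i.e. $b_\lambda a=0$ — no, that's too strong. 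The correct extraction: from $\rho(b_\lambda a)=0$ for all $\lambda$ we get that the entire function $\lambda\mapsto\rho(e^{\lambda x}a)\equiv 0$, and differentiating the resolvent identity appropriately (or using $\rho(e^{\lambda x}a e^{-\lambda x})=\rho(e^{\lambda x}a)=0$ together with $\frac{d}{d\lambda}\big|_0 e^{\lambda x}ae^{-\lambda x}=xa-ax=[x,a]$) shows $\rho([x,a]\cdot(\text{stuff}))=0$; this is the route via \emph{Aupetit's theorem} that ``$\rho(y_\lambda)\equiv 0$ along a holomorphic family implies $y_0'$ is a multiple of a radical-type element''. I expect the main obstacle to be exactly this passage from ``$b$ ranges over the principal component'' to ``$x$ ranges over all of $A$'', and the right tool is the subharmonicity of the spectral radius along holomorphic families (Vesentini) combined with Aupetit's characterization in Theorem \ref{t Aupetit c 1}$(b)$: show $\sup\{\rho(x+\alpha a):\alpha\in\mathbb C\}<\infty$ for every $x\in A$ by noting that for $\alpha$ with $\mathbf1+\alpha^{-1}x$ invertible and in the principal component one controls $\rho(x+\alpha a)=|\alpha|\,\rho(\alpha^{-1}x+a)$ via the hypothesis, and the remaining (bounded set of) $\alpha$ contribute a finite supremum by compactness — then $(b)\Rightarrow(a)$ of Theorem \ref{t Aupetit c 1} finishes it, once one recalls that for special/associative algebras the McCrimmon radical is the Jacobson radical.
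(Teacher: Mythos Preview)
The paper does not itself prove this lemma; it is quoted from \cite{Hat2011}. However, immediately afterward the paper proves a Jordan generalization (Proposition~\ref{p Hatori-Aupetit Jordan}), and the implication $(d)\Rightarrow(a)$ there, read in the associative case, \emph{is} a proof of the lemma. That argument is short: fix $y\in A$ and $\mu\in\mathbb C$ with $|\mu|>\rho(y)$; then $b:=(y-\mu\mathbf 1)^{-1}$ lies in the principal component $A_{\mathbf 1}^{-1}$ (connect it to a nonzero scalar via $s\mapsto(sy-\mu\mathbf1)^{-1}$, $s\in[0,1]$), so by hypothesis $\rho(\alpha b a)=0$ for every $\alpha$, hence $\mathbf1-\alpha ba$ is invertible, hence $(y-\mu\mathbf1)(\mathbf1-\alpha ba)=y-\mu\mathbf1-\alpha a$ is invertible for every $\alpha$. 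Thus $\mu\notin Sp(y-\alpha a)$ whenever $|\mu|>\rho(y)$, so $\rho(y-\alpha a)\le\rho(y)$ for all $\alpha$, and Theorem~\ref{t Aupetit c 1}$(b)$ (together with the fact that the McCrimmon and Jacobson radicals agree for associative algebras) gives $a\in\mathrm{Rad}(A)$.

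Your proposal never reaches a complete argument. The first several routes you yourself flag as circular or too strong, so set those aside. The final paragraph does aim at the correct target, Aupetit's criterion $(b)$, but the mechanism you propose does not work. You write $\rho(x+\alpha a)=|\alpha|\,\rho(\alpha^{-1}x+a)$ and claim that for large $|\alpha|$ the hypothesis ``controls'' the right-hand side because $\mathbf1+\alpha^{-1}x$ is in the principal component. But the hypothesis gives $\rho(ba)=0$ for a \emph{product} $ba$; with $b=\mathbf1+\alpha^{-1}x$ this yields $\rho(a+\alpha^{-1}xa)=0$, which says nothing about $\rho(a+\alpha^{-1}x)$. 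And even if you could bound $\rho(\alpha^{-1}x+a)$ uniformly, multiplying by $|\alpha|$ would still blow up. The missing idea is precisely the choice $b=(y-\mu\mathbf1)^{-1}$ above: it is what converts the multiplicative hypothesis $\rho(ba)=0$ (i.e.\ $\mathbf1-\alpha ba$ invertible) into the additive statement $y-\mu\mathbf1-\alpha a$ invertible, and hence into the uniform spectral bound $\rho(y-\alpha a)\le\rho(y)$.
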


Our next goal is a Jordan version of the previous result. As we have already commented, the left and right multiplication operations have no direct Jordan analogue, and the set of invertible elements in a Jordan-Banach algebra $M$ is not, in general, closed under Jordan products; however $U_a(b)$ is invertible if and only if $a$ and $b$ are (cf. \cite[Theorem 4.1.3$(vi)$]{Cabrera-Rodriguez-vol1}).\smallskip

As pointed out by Aupetit in \cite[Theorem 2.3]{Aupetit95}, since the closed subalgebra of a unital Jordan-Banach algebra $M$ generated by $\textbf{1}$ and an element $a$ is an associative Banach algebra, the standard holomorphic functional calculus in Banach algebras can be extended to the situation of Jordan-Banach algebras. Beside the properties stated in \cite[Theorem 2.3]{Aupetit95} we shall employ the detailed construction developed in \cite[Theorems 4.1.88 and 4.1.93]{Cabrera-Rodriguez-vol1}. Suppose $\Omega$ is an open neighborhood of Sp$(a)$, and let $h_0$ stand for the inclusion mapping $\Omega\hookrightarrow \mathbb{C}$. Then there is a unique continuous unit-preserving algebra homomorphism $f \mapsto f (a)$ from the complex algebra $\mathcal{H} (\Omega),$ of all complex-valued holomorphic functions on $\Omega,$ into $M$ taking $h_0$ to $a$ and satisfying the following properties:\begin{enumerate}[$(i)$]\item $\displaystyle f(a) = \frac{1}{2 \pi i} \int_{\Gamma} f(\lambda) (\lambda \textbf{1}-a)^{-1} d\lambda$, for any positively orientated curve included in $\Omega$ and surrounding Sp$(a)$;
\item $f(a)$ is contained in the smallest closed strongly associative subalgebra of $M$ containing $\textbf{1}$ and $a$;
\item (Spectral mapping theorem) For each $f\in \mathcal{H} (\Omega)$ we have
$\hbox{Sp}(M, f (a)) = f (\hbox{Sp}(M,a))$;
\item The set $$M_{\Omega} : =\{ x\in M : \hbox{Sp} (M,x) \subseteq \Omega\}$$ is a non-empty open subset of $M$, and the mapping  $\widetilde{f} : x \mapsto f (x)$ from $M_{\Omega}$ to $M$ is holomorphic (i.e. there exists a bounded linear operator $T: M\to M$ such that $\displaystyle \lim_{h\to 0} \frac{\|f(x + h) - f(x)- T(h)\|}{\|h\|} = 0$ for any $x\in M_{\Omega}$).
\end{enumerate}

Henceforth, given a Banach space $X$, the open (respectively, closed) ball of radius $\rho$ and center $a\in X$ will be denoted by $B_{_X}(a,\rho)$ or by $B(a,\rho)$ (respectively, $\overline{B}_{_X}(a,\rho)$).

\begin{proposition}\label{p Hatori-Aupetit Jordan} Let $M$ be a unital Jordan-Banach algebra and $a\in M$. Then the following statements are equivalent:
\begin{enumerate}[$(a)$]\item $a$ is in the McCrimmon radical of $M$;
\item $\rho(U_x(a)) = 0,$ for every $x$ in $M$;
\item $\rho(U_b(a)) = 0,$ for every $b$ in $M^{-1}$;
\item $\rho(U_b(a)) = 0,$ for every $b$ in the principal component $M_{\mathbf{1}}^{-1}$ of $M^{-1}$.
\end{enumerate}
\end{proposition}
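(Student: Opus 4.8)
The plan is to establish the chain of implications $(a)\Rightarrow(b)\Rightarrow(c)\Rightarrow(d)\Rightarrow(a)$, where the first three are essentially immediate and the real work lies in $(d)\Rightarrow(a)$. The implication $(a)\Rightarrow(b)$ is nothing but Theorem \ref{t Aupetit c 1}$(c)$, and $(b)\Rightarrow(c)\Rightarrow(d)$ are trivial restrictions of the quantifier over $x\in M$ first to $b\in M^{-1}$ and then to $b\in M_{\textbf{1}}^{-1}$. So the whole content is to show that if $\rho(U_b(a))=0$ for every $b$ in the principal component $M_{\textbf{1}}^{-1}$, then $a\in\mathcal{R}ad(M)$.

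For $(d)\Rightarrow(a)$ I would aim to verify condition $(b)$ of Theorem \ref{t Aupetit c 1}, namely that $\rho(U_x(a))=0$ for \emph{every} $x\in M$, by an analytic/limiting argument that trades the arbitrary $x$ for invertible elements in the principal component. First I would observe that $M_{\textbf{1}}^{-1}$ is an open subset of $M$ containing $\textbf{1}$, and — crucially — that for $b$ close enough to $\textbf{1}$ (say $\|b-\textbf{1}\|<1$) one has $b\in M_{\textbf{1}}^{-1}$, since such $b$ are invertible and can be joined to $\textbf{1}$ by the segment $t\mapsto \textbf{1}+t(b-\textbf{1})$, which stays inside $M^{-1}$. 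Then, fixing an arbitrary $x\in M$ and a scalar $\lambda$, the element $b=\textbf{1}+\lambda\varepsilon x$ lies in $M_{\textbf{1}}^{-1}$ for all sufficiently small $\varepsilon>0$, so $\rho(U_{\textbf{1}+\lambda\varepsilon x}(a))=0$ by hypothesis. The idea is now to exploit the polynomial (in fact quadratic) dependence of $b\mapsto U_b(a)$ on $b$: expanding, $U_{\textbf{1}+\lambda\varepsilon x}(a) = U_{\textbf{1}}(a) + 2\lambda\varepsilon\, U_{\textbf{1},x}(a) + \lambda^2\varepsilon^2\, U_x(a) = a + 2\lambda\varepsilon\,(x\circ a) + \lambda^2\varepsilon^2\, U_x(a)$, using $U_{\textbf{1},x}(a)=x\circ a$.

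From here I would extract spectral information via a Liouville-type / entire-function argument, in the spirit of Aupetit's subharmonicity techniques (and paralleling Hatori's Lemma \ref{l Hatori associative} in the associative case). Concretely: for fixed $x$ and fixed small $\varepsilon$, the map $\lambda\mapsto \rho\big(a + 2\lambda\varepsilon(x\circ a)+\lambda^2\varepsilon^2 U_x(a)\big)$ vanishes identically on a neighborhood of $0$ (indeed wherever $\textbf{1}+\lambda\varepsilon x$ stays in the principal component), and $\rho$ composed with an entire $M$-valued map is subharmonic; a subharmonic function that is $\equiv 0$ on a nonempty open set and $\geq 0$ everywhere, together with upper semicontinuity of the spectrum, forces the spectrum of $a+2\lambda\varepsilon(x\circ a)+\lambda^2\varepsilon^2 U_x(a)$ to be $\{0\}$ for all $\lambda\in\mathbb{C}$. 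Taking $\lambda$ large while keeping $\varepsilon\lambda$ bounded — say set $\mu=\varepsilon\lambda$ fixed and let $\lambda\to\infty$ so $\varepsilon^2\lambda^2=\mu^2\cdot(\lambda/\varepsilon^{-1})\to\infty$ is the wrong scaling, so instead I would rescale: replace $x$ by $tx$ and note $U_{tx}(a)=t^2 U_x(a)$, reducing everything to controlling $\rho(\alpha a+\beta(x\circ a)+U_x(a))$ as the scalars vary, and then isolate the $U_x(a)$ term by a dilation $\lambda\to\infty$ after dividing through. This shows $\rho(U_x(a))=0$ for every $x\in M$, which is exactly Theorem \ref{t Aupetit c 1}$(b)$ for the radical, completing the proof.

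The main obstacle I anticipate is making the limiting/scaling argument in the last paragraph fully rigorous: one must be careful that the relevant $b$ genuinely lie in the \emph{principal} component (not merely in $M^{-1}$) throughout the deformation, and one must correctly set up the homogeneity so that the quadratic term $U_x(a)$ can be separated from the linear term $x\circ a$ and the constant term $a$. The cleanest route is probably to fix $x\in M$, consider the entire function $f(\zeta)=\rho\big(U_{\textbf{1}+\zeta x}(a)\big)$ — or rather the subharmonic function $\zeta\mapsto\rho(a+2\zeta(x\circ a)+\zeta^2 U_x(a))$ — show it vanishes near $0$ hence (by the maximum principle for subharmonic functions applied to the entire $M$-valued map $\zeta\mapsto a+2\zeta(x\circ a)+\zeta^2 U_x(a)$, as in \cite{Aupetit93}) vanishes identically, and then read off $\rho(U_x(a))=0$ from the behaviour as $|\zeta|\to\infty$ via the Gelfand--Beurling spectral radius formula \cite[Theorem 4.1.17]{Cabrera-Rodriguez-vol1}. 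Once $(d)\Rightarrow(a)$ is in hand, the proposition is closed.
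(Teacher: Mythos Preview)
Your overall strategy for $(d)\Rightarrow(a)$ --- perturb $\textbf{1}$ to $\textbf{1}+\zeta x$, expand $U_{\textbf{1}+\zeta x}(a)$ quadratically in $\zeta$, and propagate the vanishing of the spectral radius from a small disk to all of $\mathbb{C}$ by subharmonicity --- is a legitimate and genuinely different route from the paper's. But as written it has two concrete gaps that prevent it from closing.

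First, the assertion that ``a subharmonic function that is $\equiv 0$ on a nonempty open set and $\geq 0$ everywhere'' must vanish identically is false (consider $\max(\log|z|,0)$). What you actually need is Vesentini's theorem, which says that $\zeta\mapsto \log\rho(p(\zeta))$ is subharmonic for any analytic $M$-valued map $p$; this holds in Jordan--Banach algebras and is precisely what underlies \cite{Aupetit93}. A subharmonic function equal to $-\infty$ on an open set is identically $-\infty$, and \emph{that} gives $\rho(p(\zeta))=0$ for all $\zeta\in\mathbb{C}$. Second, the extraction of $\rho(U_x(a))=0$ ``from the behaviour as $|\zeta|\to\infty$ via Gelfand--Beurling'' does not work: the spectral radius is only upper semicontinuous, so $\zeta^{-2}p(\zeta)\to U_x(a)$ with $\rho(\zeta^{-2}p(\zeta))=0$ yields no upper bound on $\rho(U_x(a))$, and Gelfand--Beurling is irrelevant here. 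A correct fix is to set $q(w)=U_x(a)+2w(x\circ a)+w^2 a$, observe that $p(\zeta)=\zeta^2 q(\zeta^{-1})$ gives $\rho(q(w))=0$ for every $w\neq 0$, and then apply the sub-mean-value inequality to the subharmonic function $\log\rho\circ q$ at $w=0$. With these two repairs your argument goes through.

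For comparison, the paper's proof of $(d)\Rightarrow(a)$ avoids subharmonicity entirely and aims instead at condition $(d)$ of Theorem~\ref{t Aupetit c 1}. Fixing an arbitrary $y\in M$ and $|\mu|>\rho(y)$, it exhibits an explicit path showing that $x=(y-\mu\textbf{1})^{-2}$ lies in $M_{\textbf{1}}^{-1}$, applies the hypothesis to scalar multiples $\beta x$, and uses the fundamental identity $U_{U_x(c)}=U_xU_cU_x$ to conclude that $y-\mu\textbf{1}-\alpha a$ is invertible for every $\alpha\in\mathbb{C}$. This forces $\rho(y-\alpha a)\leq\rho(y)$ uniformly in $\alpha$, whence $\rho(z)\leq\|z-a\|$ for all $z$, i.e., Aupetit's criterion with $C=1$. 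The paper's argument is thus more elementary and bypasses Vesentini entirely; yours is closer in spirit to Aupetit's own analytic techniques but must invoke the subharmonicity of $\log\rho$ explicitly and handle the point at infinity with care.
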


\begin{proof} The equivalence $(a) \Leftrightarrow (b)$ is given by Theorem \ref{t Aupetit c 1}. The implications $(b)\Rightarrow (c)\Rightarrow (d)$ are clear.\smallskip

To prove the implication $(d)\Rightarrow (a)$ we shall show how to adapt and modify an argument from Aupetit's paper \cite{Aupetit93}. Fix an arbitrary $y\in M$ and $\mu \in \mathbb{C}$ with $|\mu|>\rho (y).$ Let us observe that $y - t \frac{\mu}{|\mu|} \textbf{1}\in M^{-1},$ equivalently, $- \frac{|\mu|}{\mu t} y +\textbf{1}\in M^{-1},$ for all $t\in [|\mu|, +\infty)$. The mapping $\gamma: [0,1]\to M^{-1}$, $\gamma(s) =- s \mu^{-1} y +\textbf{1}$ gives a continuous curve connecting $\textbf{1}$ and $-\mu^{-1} y +\textbf{1}$. Since for each $\alpha\in \mathbb{C}\backslash\{0\}$ we can easily find a continuous path in $M^{-1}$ connecting $\textbf{1}$ and $\alpha \textbf{1}$ (actually if $a\in M^{-1}$ is connected by a continuous path in $M^{-1}$ with $\textbf{1}$, the same holds for $\alpha a$), it follows that $ y - t \frac{\mu}{|\mu|} \textbf{1}\in M_{\mathbf{1}}^{-1},$ for all $t\in [|\mu|, +\infty)$.\smallskip

Let us make some observations on the spectrum of the elements of the form $ y - t \frac{\mu}{|\mu|} \textbf{1}\in M_{\mathbf{1}}^{-1},$ with $t\in [|\mu|, +\infty)$. 
 Since for $t\in [|\mu|, +\infty)$ we have $$\hbox{Sp}\left(y - t \frac{\mu}{|\mu|} \textbf{1}\right) =\hbox{Sp}(y)- t \frac{\mu}{|\mu|}\subseteq \rho(y) \overline{B}_{\mathbb{C}} (0,1) -t \frac{\mu}{|\mu|}= \overline{B}_{\mathbb{C}} (-t \frac{\mu}{|\mu|}, \rho(y)) $$ and $$\hbox{Sp}\left(\textbf{1} -  \frac{|\mu|}{t} \mu^{-1} y \right) = -  \frac{|\mu|}{t} \mu^{-1}  \hbox{Sp}\left(y - t \frac{\mu}{|\mu|} \textbf{1}\right) \subseteq \{z\in \mathbb{C} : \Re\hbox{e}(z) >0\}= \Omega.$$\smallskip

Consider the holomorphic function $f: \Omega \to \mathbb{C},$ $f(\lambda) = \frac{1}{\sqrt{\lambda}}$ --where we consider the principal branch of the square root-- and the holomorphic functional calculus to compute $\left(\textbf{1} -  \frac{|\mu|}{t} \mu^{-1} y \right)^{-\frac12} = f \left(\textbf{1} -  \frac{|\mu|}{t} \mu^{-1} y \right)$ with $t\geq |\mu|$. By the continuity of the holomorphic functional calculus, the mapping $\gamma : [|\mu|, \infty)\to M^{-1},$ $\gamma(t) = \left(\textbf{1} -  \frac{|\mu|}{t} \mu^{-1} y \right)^{-\frac12}$ is continuous, and clearly satisfies $\gamma (|\mu|) = \left(\textbf{1} -  \mu^{-1} y \right)^{-\frac12}$ and $\lim_{t\to +\infty} \gamma (t) = \textbf{1}$. Therefore the element $x= \left(\textbf{1} -  \mu^{-1} y \right)^{-\frac12}$ lies in $M_{\mathbf{1}}^{-1},$ and hence $\beta x\in M_{\mathbf{1}}^{-1}$ for all $\beta \in \mathbb{C}\backslash\{0\}$.\smallskip

We know from the hypothesis that $\rho (U_{\beta x} (a))=0$ for all non-zero $\beta\in \mathbb{C}$. In this case $$U_{x} (\textbf{1} -  \mu^{-1} y- \beta^2 a) = \textbf{1} - U_{\beta x} (a) \hbox{ is an invertible element for all } \beta\in \mathbb{C}\backslash\{0\}.$$ But the same conclusion trivially holds for $\beta =0$. We therefore deduce that the element $U_{x} (\textbf{1} -  \mu^{-1} y+ \alpha a)$ is invertible for all $\alpha\in \mathbb{C}$, and by the fundamental identity $$U_{U_{x} (\textbf{1} -  \mu^{-1} y+\alpha a)} = U_{x} U_{\textbf{1} -  \mu^{-1} y+ \alpha a} U_{x}, \hbox{ and } \textbf{1} -  \mu^{-1} y +\alpha a \hbox{ both are invertible } \forall \alpha\in \mathbb{C}.$$ In particular, $$ - \mu \textbf{1} +  y -\alpha a = - \mu ( \textbf{1} -  \mu^{-1} y +\alpha  \mu^{-1} a )\in M^{-1},$$ for all $\alpha \in \mathbb{C}$. We thus deduce that $\mu \notin Sp(y -\alpha a)$ for all $\alpha \in \mathbb{C}$. This proves that for each $\varepsilon >0$, $$\mathbb{C}\backslash B_{\mathbb{C}}( 0, \rho(y) + \varepsilon) \subseteq \mathbb{C}\backslash Sp(y -\alpha a),$$ equivalently,
$Sp(y -\alpha a)  \subseteq B_{\mathbb{C}}( 0, \rho(y) + \varepsilon),$ and thus $\rho (y -\alpha a) \leq \rho(y) + \varepsilon,$ for all $\alpha\in \mathbb{C}.$ We can clearly let $\varepsilon \to 0$. It can be easily deduced that for each $z$ in $M$ taking $y = z-a$ we have $$\rho (z) = \rho ( y +a ) \leq \rho (y)\leq \|y\| = \| z-a\|,$$ and Aupetit's Theorem \ref{t Aupetit c 1} implies that $a$ lies in the McCrimmon radical of $M$.
\end{proof}

It is perhaps worth to remark several of the properties employed in the proof above. The reader will probably feel more confortable with some references in the Jordan setting. A more detailed exposition is presented in the next remark.

\begin{remark}\label{r properties of the identity component Jordan} Let $M$ be a unital Jordan-Banach algebra and let $M_{\mathbf{1}}^{-1}$ denote the principal component of $M^{-1}$. Then the following properties hold: \begin{enumerate}[$(1)$]\item If $b\in M_{\mathbf{1}}^{-1}$ and $\alpha\in \mathbb{C}\backslash\{0\},$ the element $\alpha b$ lies in $M_{\mathbf{1}}^{-1}$;
\item If $y\in M$ and $\mu \in \mathbb{C}$ with $|\mu|>\rho (y),$ the elements $(y \pm \mu \textbf{1})$ and $(y \pm \mu \textbf{1})^{-2}$ belong to $M_{\mathbf{1}}^{-1};$
\item If $a\in M^{-1}$ then its square is also in $M^{-1}$. Furthermore, defining $a^{-n} := (a^{-1})^n$ for $n > 0$ and $a^0 := \textbf{1}$, then $a^k \circ a^l = a^{k+l}$ for all integer numbers $k,l$. The element $a^n$ is invertible for every natural $n,$ and $(a^{n})^{-1} = (a^{-1})^n$ for all integer number $n$;
\item The inverse of every element in the principal component $M_{\mathbf{1}}^{-1}$ lies in $M_{\mathbf{1}}^{-1}$.
\end{enumerate}

Statements $(1)$ and $(2)$ have been explicitly justified in the proof of the previous result. Statement $(3)$ is proved in \cite[Theorem 4.1.3$(v)$]{Cabrera-Rodriguez-vol1}. The final statement follows from the fact that the mapping $M^{-1}\to M^{-1}$, $a\mapsto a^{-1}$
is a homeomorphism {\rm(}cf. \cite[Proposition 4.1.6]{Cabrera-Rodriguez-vol1}{\rm)} with $\textbf{1} = \textbf{1}^{-1}.$
\end{remark}

In order to review some additional properties of the principal component of the set of invertible elements in a Jordan-Banach algebra $M$, we shall recall the exponential function in this setting. For each element $a$ in $M$, the series $\displaystyle \exp(a) =\sum_{n=0}^\infty \frac{a^n}{n!}$ is uniformly convergent on bounded subsets of $M$. Furthermore, the mapping $a\mapsto \exp (a)$ defines an analytic mapping on $M$. It is known that the Jordan-Banach subalgebra $C$ of $M$ generated by an element $a$ is a commutative associative subalgebra with respect to the inherited Jordan product \cite[1.1]{Jacobson81}. If an associative Banach algebra $A$ is regarded as a Jordan-Banach algebra, for each $a\in A$, $\exp(a)$ is just the usual
exponential series in the usual sense for associative Banach algebras.  It follows that $\exp(a)$ has its usual meaning in $C$, and consequently $\exp(s a )\  \exp(t a) = \exp((s + t) a)$
for all $s, t \in \mathbb{C}$. We also have $U_a (b) = a^2 \circ b$ for $a, b \in  C$, which yields $$U_{\exp( t x)} (\exp(-tx)) = \exp(tx),\hbox{ and } U_{\exp(tx)} (\exp( -tx)^2) = \textbf{1}.$$ It follows, for example from \cite[1.6.1]{Jacobson81}, that $\exp(tx)$ is invertible in $M$ with inverse $\exp(-tx)$.\smallskip

A well known result in the theory of associative Banach algebras proves that for each unital Banach algebra $A$, the least subgroup of $A^{-1}$ containing $\exp(A)$ is the principal component of $A^{-1}$ (cf. \cite[Propositions 8.6 and 8.7]{BonDunCNA73}). The description of the principal component of the invertible elements in a Jordan-Banach algebra was an open problem for many years. The final answer was obtained by O. Loos in \cite{Loos96}, where it is shown that for each unital Jordan-Banach algebra $M$ the principal component of $M^{-1}$ is the set $$M_{\mathbf{1}}^{-1} = \left\{ U_{\exp(a_1)}\cdots U_{\exp(a_n)} (\textbf{1}) : a_i\in M, \ n\geq 1 \right\},$$ in particular, $M_{\mathbf{1}}^{-1}$ is open and closed in $M^{-1}$. It follows from this that each connected component of $M^{-1}$ is analytically arcwise connected \cite[Corollary]{Loos96}. It should be noted that in Loos's theorem the norm is not required to satisfy $\|x\circ y\|\leq \|x\| \ \|y\|$ and $\|\textbf{1}\|=1$.\smallskip

The subset of invertible elements in a unital Jordan-Banach algebra $M$ lacks stability under the Jordan product. However, we have $$U_{M^{-1}} (M^{-1}) = \{ U_a (b) : a,b\in M^{-1}\}\subseteq M^{-1}.$$ This motivates us to introduce the following notion. A subset $\mathfrak{M}$ of $M^{-1}$ will be called a \emph{quadratic subset} if $U_{\mathfrak{M}} (\mathfrak{M}) \subseteq \mathfrak{M}$.

\begin{lemma}\label{l principal component as the least quadratic subset of invertible elements} Let $M$ be a unital Jordan-Banach algebra. Then the principal component of $M^{-1}$ is precisely the least quadratic subset of $M^{-1}$ containing $\exp(M)$. As a consequence, for each integer number $n$ and each $a\in M_{\mathbf{1}}^{-1}$ the element $a^{n}$ lies in $M_{\mathbf{1}}^{-1}$.
\end{lemma}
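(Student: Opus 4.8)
The plan is to prove the two assertions of Lemma \ref{l principal component as the least quadratic subset of invertible elements} in turn, leaning on Loos's description of $M_{\textbf{1}}^{-1}$ recalled above. For the first assertion, I would argue by mutual inclusion. One direction is immediate: $M_{\textbf{1}}^{-1}$ contains $\exp(M)$ (since $\exp(tx)$ is invertible and lies on the continuous arc $t\mapsto\exp(tx)$ joining $\textbf{1}$ to $\exp(x)$), and it is a quadratic subset because $U_{M^{-1}}(M^{-1})\subseteq M^{-1}$ together with the fundamental formula \eqref{eq fundamental identity UaUbUa} shows that $U_a(b)$, for $a,b\in M_{\textbf{1}}^{-1}$, is connected to $\textbf{1}$ through $M^{-1}$ (apply $U_{a_s}(b_t)$ along arcs $a_s, b_t$ joining $a,b$ to $\textbf{1}$); hence the least quadratic subset containing $\exp(M)$ is contained in $M_{\textbf{1}}^{-1}$. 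For the reverse inclusion I would invoke Loos's theorem directly: every element of $M_{\textbf{1}}^{-1}$ has the form $U_{\exp(a_1)}\cdots U_{\exp(a_n)}(\textbf{1})$, and since $\exp(a_i)\in\exp(M)$, $\textbf{1}=\exp(0)\in\exp(M)$, and any quadratic subset containing $\exp(M)$ is by definition closed under the operators $U_c$ with $c$ in it, an easy induction on $n$ shows such an element lies in any quadratic subset containing $\exp(M)$. Thus $M_{\textbf{1}}^{-1}$ is the least such subset.

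For the consequence on powers, the clean route is to observe that $M_{\textbf{1}}^{-1}$, being the least quadratic subset containing $\exp(M)$, is in particular a quadratic subset, so it is closed under the maps $a\mapsto U_a(b)$ for $a,b\in M_{\textbf{1}}^{-1}$; specializing $b=\textbf{1}$ (which lies in $M_{\textbf{1}}^{-1}$), we get $U_a(\textbf{1})=a^2\in M_{\textbf{1}}^{-1}$ whenever $a\in M_{\textbf{1}}^{-1}$, using $U_a(\textbf{1}) = 2(a\circ\textbf{1})\circ a - a^2\circ\textbf{1} = 2a^2 - a^2 = a^2$. This gives closure under squaring. To reach arbitrary natural powers I would iterate: from $a,a^2\in M_{\textbf{1}}^{-1}$ we obtain $U_a(a^2) = 2(a\circ a^2)\circ a - a^2\circ a^2 = 2a^4 - a^4 = a^3\circ a = a^4$... more cleanly, use the known Jordan identities $U_a(a^n)=a^{n+2}$ and the power-associativity recalled in the excerpt, together with $U_{a^k}(\textbf{1})=a^{2k}$, to generate all positive powers; alternatively, and perhaps most transparently, note $a^n = U_{\exp(\text{-})}$-free: simply observe $a\in M_{\textbf{1}}^{-1}$ implies $a^2\in M_{\textbf{1}}^{-1}$, and then $a^{2k}\in M_{\textbf{1}}^{-1}$ by iterating squaring, while odd powers $a^{2k+1}=U_{a^k}(a)$ (an identity valid in the associative subalgebra generated by $a$, where $U_c(d)=c^2d$) also lie in $M_{\textbf{1}}^{-1}$ since $a^k\in M_{\textbf{1}}^{-1}$ and $a\in M_{\textbf{1}}^{-1}$. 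Finally, for negative $n$ and $n=0$, Remark \ref{r properties of the identity component Jordan}(3)--(4) already records that the inverse of an element of $M_{\textbf{1}}^{-1}$ lies in $M_{\textbf{1}}^{-1}$ and that $a^k\circ a^l = a^{k+l}$ for all integers, so $a^{-n}=(a^{-1})^n\in M_{\textbf{1}}^{-1}$ by the positive-power case applied to $a^{-1}$, and $a^0=\textbf{1}\in M_{\textbf{1}}^{-1}$.

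The main obstacle I anticipate is verifying that the least quadratic subset containing $\exp(M)$ really captures all of Loos's expressions $U_{\exp(a_1)}\cdots U_{\exp(a_n)}(\textbf{1})$ — that is, making sure the closure condition $U_{\mathfrak{M}}(\mathfrak{M})\subseteq\mathfrak{M}$ is strong enough to build up nested products of $U$-operators of the required shape. The subtlety is that the definition only guarantees closure under $U_a(b)$ for $a,b$ already in the set, whereas Loos's expression applies $U_{\exp(a_1)}$ to the element $U_{\exp(a_2)}\cdots U_{\exp(a_n)}(\textbf{1})$; the induction works precisely because at each stage both the outer element $\exp(a_i)$ and the inner element are already known to lie in the set, so one only ever uses the single closure axiom. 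I would also need to be slightly careful that squaring is genuinely an instance of the quadratic-subset axiom, i.e. that $U_a(\textbf{1})=a^2$, which follows from the definition of $U_a$ and $a\circ\textbf{1}=a$; and that $\textbf{1}\in\exp(M)\subseteq\mathfrak{M}$ so that $b=\textbf{1}$ is a legitimate choice. None of these steps is deep, but the bookkeeping in the induction is where the argument must be stated precisely.
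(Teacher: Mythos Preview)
Your proposal is correct and follows the same overall architecture as the paper's proof: mutual inclusion using Loos's description for one direction, closure of $M_{\textbf{1}}^{-1}$ under $U_a(b)$ for the other, and powers via identities of the form $U_a(\cdot)$.

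The one noteworthy difference is in how you verify that $M_{\textbf{1}}^{-1}$ is itself a quadratic subset. You argue topologically: given $a,b\in M_{\textbf{1}}^{-1}$, choose continuous arcs $a_s,b_t$ in $M^{-1}$ from $a,b$ to $\textbf{1}$ and observe that $(s,t)\mapsto U_{a_s}(b_t)$ stays in $M^{-1}$ and joins $U_a(b)$ to $U_{\textbf{1}}(\textbf{1})=\textbf{1}$; the fundamental formula is not actually needed here, only the continuity of $(x,y)\mapsto U_x(y)$ and the fact that $U_{M^{-1}}(M^{-1})\subseteq M^{-1}$. The paper instead argues algebraically, invoking Loos's theorem a second time: writing $a=U_{\exp(a_1)}\cdots U_{\exp(a_n)}(\textbf{1})$ and $b=U_{\exp(b_1)}\cdots U_{\exp(b_m)}(\textbf{1})$, it expands $U_a(b)$ via the fundamental formula into another such product. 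Your topological route is arguably more direct and uses Loos's theorem only once; the paper's route has the merit of exhibiting $U_a(b)$ explicitly in Loos's canonical form. For the powers, the paper uses the single clean recursion $a^{n}=U_a(a^{n-2})$, which is slightly tidier than your even/odd split via $a^{2k}=(a^k)^2$ and $a^{2k+1}=U_{a^k}(a)$, but both are valid.
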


\begin{proof} Let us first show that $M_{\mathbf{1}}^{-1}$ is a quadratic subset of $M^{-1}$ (clearly, $M_{\mathbf{1}}^{-1}$ contains $\exp(M)$). For this purpose, by Loos' theorem \cite{Loos96}, we take two elements $a = U_{\exp(a_1)}\cdots U_{\exp(a_n)} (\textbf{1})$ and $b = U_{\exp(b_1)}\cdots U_{\exp(b_m)} (\textbf{1})$ in $M_{\mathbf{1}}^{-1}$. By the fundamental formula $$U_{a} (b) = U_{U_{\exp(a_1)}\cdots U_{\exp(a_n)} (\textbf{1})} (U_{\exp(b_1)}\cdots U_{\exp(b_m)} (\textbf{1})) $$
$$ = U_{\exp(a_1)} \cdots U_{\exp(a_n)} U_{\textbf{1}} U_{\exp(a_n)}\cdots U_{\exp(a_1)} U_{\exp(b_1)}\cdots U_{\exp(b_m)} (\textbf{1}) \in M_{\mathbf{1}}^{-1}.$$ Given $a\in M_{\mathbf{1}}^{-1}$, it follows from the above that $a^2 = U_{a} (\textbf{1})\in M_{\mathbf{1}}^{-1}$, $a^3 = U_{a} (a)\in M_{\mathbf{1}}^{-1}$, 
and by an induction argument $a^{n} = U_{a} (a^{n-2})\in M_{\mathbf{1}}^{-1}$ for all $n\geq 3$. We have seen in Remark \ref{r properties of the identity component Jordan}$(3)$ that the inverse of every element in $M_{\mathbf{1}}^{-1}$ also lies in $M_{\mathbf{1}}^{-1}$. Therefore the previous arguments also show that $a^{n}\in M_{\mathbf{1}}^{-1}$ for all $a\in M_{\mathbf{1}}^{-1}$ and every integer number $n$. \smallskip

Suppose $\mathfrak{M}$ is a \emph{quadratic subset} of $M^{-1}$ containing $\exp(M)$. Since, clearly $\textbf{1}\in \mathfrak{M}$, we have $U_{\exp(a)} (\textbf{1})\in \mathfrak{M}$. Suppose that $U_{\exp(a_n)}\cdots U_{\exp(a_1)} (\textbf{1})\in \mathfrak{M}$ for $n\geq 2$, $a_i\in M$.  Then, by the assumption on $\mathfrak{M}$, we have $$U_{\exp(a_{n+1})} U_{\exp(a_n)}\cdots U_{\exp(a_1)} (\textbf{1}) \in U_{\mathfrak{M}} (\mathfrak{M}) \subseteq \mathfrak{M}.$$ It follows by induction and Loos' theorem that $M_{\mathbf{1}}^{-1} \subseteq \mathfrak{M}.$
\end{proof}

\begin{remark}\label{r open subgroup} Let $A$ be a unital (associative) Banach algebra. Let $\mathfrak{A}$ be an open subgroup of $A^{-1}$. It is well known that under these assumptions $\mathfrak{A}$ is closed, and consequently $\mathfrak{A}$ contains the principal component of $A^{-1}$. The proof of this result employs the left or right multiplication operations in $A$, operations which have no direct Jordan analogue. For a Jordan-Banach algebra $M$, it would be interesting to know if every open quadratic subset $\mathfrak{M}$ of $M^{-1}$ containing the unit must be also closed, and hence $M_{\mathbf{1}}^{-1}\subseteq \mathfrak{M}$.\smallskip

Furthermore, since $A^{-1}$ is open, and hence locally connected and its connected component are open, and $\mathfrak{A}$ is clopen a classic result in topology {\rm(}see, for example, \cite[Theorem 5]{Niem77}{\rm)} asserts that $\mathfrak{A}$ must coincide with the union of some connected components of $A^{-1}$ {\rm(}but not necessarily all of them{\rm)}.\smallskip

The main component $A_{\textbf{1}}^{-1}$ of $A^{-1}$ is a subgroup of $A$ {\rm(}see \cite[Proposition 8.6]{BonDunCNA73}{\rm)}. If $\mathcal{C}$ is another connected component of $A^{-1}$ and we pick $b\in \mathcal{C}$, it is easy to see that $\mathcal{C} = b A^{-1}_{\textbf{1}} = A_{\textbf{1}}^{-1} b$ {\rm(}just apply that the left and right multiplication operators by $b$ are homeomorphisms mapping $1$ to $b${\rm)}. The component $\mathcal{C}$ need not be a subgroup, however, if we replace the product of $A$ with the one defined by $x \cdot_{b^{-1}} y = x b^{-1} y$, we get another another associative complete normed algebra $A_{b^{-1}}$, called the \emph{$b^{-1}$-homotope} of $A$, with unit $b$. The main connected component of $A_{b^{-1}}$ is precisely $\mathcal{C}$, which is therefore a subgroup of $A_{b^{-1}}$.
\end{remark}

\section{Surjective isometries between subsets of invertible elements in Jordan-Banach algebras}\label{sec: surjective isometries between subsets of invertible elements in Jordan-Banach algebras}

The lacking of left and right multiplication operators in the Jordan setting increase the difficulties to get similar conclusions to those in the previous Remark \ref{r open subgroup} for Jordan-Banach algebras. As in previous contributions, passing to an homotope is in some sense a
substitute for the left multiplication by an invertible element in an associative Banach algebra. We recall the notion of \emph{isotope} \cite[1.7]{Jacobson81} (also called \emph{homotope} by McCrimmon in \cite{McCrimm71}): Suppose $c$ is an invertible element in a unital Jordan-Banach algebra $M$ then the vector space $M$ becomes a Jordan-Banach algebra $M_c$ with unit element $c^{-1}$, Jordan product $a\circ_c b := U_{a,b} (c)$, and quadratic operators $U^{(c)}_{a} = U_a U_c$, for all $a,b\in M$.\smallskip

Henceforth we shall consider two unital Jordan-Banach algebras $M$ and $N$. Let $M_{\mathbf{1}}^{-1}$ and $N^{-1}_{\mathbf{1}}$ denote the principal components of $M^{-1}$ and $N^{-1}$, respectively. We shall consider two clopen subsets $\mathfrak{M}\subseteq M^{-1}$ and $\mathfrak{N}\subseteq N^{-1}$ which are quadratic subsets of $M^{-1}$ and $N^{-1}$, respectively, and are closed for powers and inverses. In particular $\mathfrak{M}$ and $\mathfrak{N}$ coincide with the union of some connected components of $M^{-1}$ and $N^{-1}$, respectively. It is easy to see that $\mathfrak{M}$ (respectively, $\mathfrak{N}$) contains the unit of $M$ (respectively, the unit of $N$). Namely, given $b\in \mathfrak{M}$, $b^{-1}$, $b^{-2}$ and $1 = U_b (b^{-2})$ belong to $\mathfrak{M}$. By applying that $\mathfrak{M}$ and $\mathfrak{N}$ are clopen sets we obtain $M_{\mathbf{1}}^{-1}\subseteq \mathfrak{M}$ and $N_{\mathbf{1}}^{-1} \subseteq \mathfrak{N}$. We observe that the sets $M_{\mathbf{1}}^{-1}= \mathfrak{M}$ and $N_{\mathbf{1}}^{-1} = \mathfrak{N}$ satisfy the stated hypotheses.\smallskip

As in Remark \ref{r properties of the identity component Jordan}$(1)$ if $a$ is an element in a connected component, $\mathcal{C}$, of $M^{-1}$ and $\alpha\in \mathbb{C}\backslash\{0\},$ then $\alpha a\in  \mathcal{C}$. In particular $(\mathbb{C}\backslash\{0\}) \mathfrak{M} = \mathfrak{M}$.\smallskip

Our goal is to determine the structure of every surjective isometric mapping $\Delta: \mathfrak{M}\to \mathfrak{N}$. The result will be derived after a series of lemmata and propositions.\smallskip

Let $\mathcal{D}$ be a convex subset of a Banach space $X$. We shall say that a mapping $F$ from $\mathcal{D}$ into another Banach space $Y$ is \emph{real affine} or simply \emph{affine} if for each $t\in [0,1]$, and $x,y\in \mathcal{D}$ we have $F(t x + (1-t) y ) = t F(x) + (1-t) F(y)$. We begin with an observation on the local affine behavior of $\Delta$. We state next an easy observation, which combined with a powerful result of Mankiewicz asserting that every surjective isometry between convex bodies in two arbitrary normed spaces can be uniquely extended to an affine function between the spaces (see \cite[Theorem 5]{Mank1972}), provides an useful tool for our goals.

\begin{lemma}\label{l Delta is locally affine} Let $\Delta : \mathfrak{M}\to \mathfrak{N}$ be a surjective isometry. Then $\Delta$ is a local affine mapping, concretely, for each $a\in \mathfrak{M}$ there exists a positive $\delta_a$, depending on $a$, such that $B(a,\delta_a)\subseteq \mathfrak{M}$, $B(\Delta(a), \delta_a)\subseteq \mathfrak{N}$, $\Delta(B(a,\delta_a)) = B(\Delta(a), \delta_a)$ and there exists a surjective affine isometry $F_{a, \delta_a} : M\to N$ such that $\Delta|_{B(a,\delta_a)} = F_{a, \delta_a}|_{B(a,\delta_a)}$ {\rm(}is an affine mapping{\rm)}. Furthermore, suppose that $\gamma : [0,1]\to \mathfrak{M}$ is a continuous path. Then there exists a surjective affine isometry $F: M\to N$ and an open neighborhood $U$ of $\gamma([0,1])$ such that $U\subseteq \mathfrak{M}$ and $F|_{U} = \Delta$.
\end{lemma}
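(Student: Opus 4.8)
The plan is to first establish the local statement and then bootstrap it along a compact path by a standard connectedness/overlap argument. For the local part, fix $a\in\mathfrak{M}$. Since $\mathfrak{M}$ is open in $M$ and $\mathfrak{N}$ is open in $N$, and $\Delta$ is a surjective isometry (hence a homeomorphism), I can choose $\delta_a>0$ small enough that $B(a,\delta_a)\subseteq\mathfrak{M}$ and $B(\Delta(a),\delta_a)\subseteq\mathfrak{N}$. I would then shrink $\delta_a$ further, if necessary, so that $\Delta(B(a,\delta_a))=B(\Delta(a),\delta_a)$: this follows because an isometry sends the ball $B(a,r)$ into $B(\Delta(a),r)$, and, applying the same to $\Delta^{-1}$ (which is also a surjective isometry between the two open sets), one gets $\Delta^{-1}(B(\Delta(a),r))\subseteq B(a,r)$; combining these with the fact that balls of the same radius are being compared, equality of images holds once the relevant balls lie inside $\mathfrak{M}$, $\mathfrak{N}$. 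Now $\Delta|_{\overline{B(a,\delta_a/2)}}$ is a surjective isometry between two closed balls, which are convex bodies in $M$ and $N$; by Mankiewicz's theorem \cite[Theorem 5]{Mank1972} it extends (uniquely) to a surjective affine isometry $F_{a,\delta_a}:M\to N$, and $\Delta$ agrees with $F_{a,\delta_a}$ on that ball. Relabel $\delta_a$ to be this smaller radius.

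For the path statement, let $\gamma:[0,1]\to\mathfrak{M}$ be continuous, so $K:=\gamma([0,1])$ is compact and contained in the open set $\mathfrak{M}$. For each $t\in[0,1]$ apply the local part at the point $\gamma(t)$ to get a radius $\delta_{\gamma(t)}>0$ and an affine isometry $F_{\gamma(t)}:M\to N$ with $\Delta=F_{\gamma(t)}$ on $B(\gamma(t),\delta_{\gamma(t)})$. By a Lebesgue-number argument for the open cover $\{B(\gamma(t),\delta_{\gamma(t)})\}_t$ of $K$, there is $\varepsilon>0$ and a partition $0=t_0<t_1<\dots<t_n=1$ such that each arc $\gamma([t_{k-1},t_k])$ lies in a single ball $B(\gamma(s_k),\delta_{s_k})$ of the cover, and consecutive balls overlap (their union is connected along $\gamma$). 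On each such ball $\Delta$ coincides with an affine isometry $F_{s_k}$. Where two consecutive balls overlap, the corresponding affine isometries $F_{s_k}$ and $F_{s_{k+1}}$ agree on a nonempty open set (they both equal $\Delta$ there), and two affine maps agreeing on a nonempty open subset of $M$ are identical; hence $F_{s_1}=F_{s_2}=\cdots=F_{s_n}=:F$. Then $U:=\bigcup_{k} B(\gamma(s_k),\delta_{s_k})$ is an open neighbourhood of $K$ contained in $\mathfrak{M}$ on which $\Delta=F$.

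I expect the only genuinely delicate point to be the verification that $\delta_a$ can be chosen so that $\Delta(B(a,\delta_a))=B(\Delta(a),\delta_a)$ exactly (rather than merely $\subseteq$), which is what licenses the application of Mankiewicz's theorem to a \emph{surjective} isometry between two balls; this is a routine but careful use of the isometry property of both $\Delta$ and $\Delta^{-1}$ together with openness of $\mathfrak{M}$ and $\mathfrak{N}$. Everything else is soft: Mankiewicz's theorem does the heavy lifting locally, and the globalization along $\gamma$ is the usual "analytic continuation along a path" argument, using that affine maps are rigid (determined by their restriction to any nonempty open set) to glue the local pieces into a single $F$.
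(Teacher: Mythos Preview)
Your proposal is correct and follows essentially the same approach as the paper: apply Mankiewicz's theorem locally on balls (once you have arranged $\Delta(B(a,\delta_a))=B(\Delta(a),\delta_a)$, which the paper obtains via the identity $\Delta(B(a,\delta)\cap\mathfrak{M})=B(\Delta(a),\delta)\cap\mathfrak{N}$ valid for all $\delta$, together with openness), and then globalize along the compact path by a finite cover and the rigidity of affine maps on overlaps. The only cosmetic differences are that the paper cites \cite[Theorem 2]{Mank1972} on open balls rather than Theorem 5 on closed convex bodies, and handles the chain of overlapping balls via connectedness of $\gamma([0,1])$ rather than an explicit Lebesgue-number partition.
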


\begin{proof} Since $\Delta$ is a surjective isometry, for each $\delta >0$, $a\in \mathfrak{M}$, we have $$\Delta\left( B(a,\delta) \cap \mathfrak{M} \right) =B(\Delta(a),\delta) \cap \mathfrak{N}.$$ Since $\mathfrak{M}$ and $\mathfrak{N}$ are open we can find $\delta_a>0$ satisfying $B(a,\delta_a)\subset \mathfrak{M}$, $B(\Delta(a), \delta_a)\subset \mathfrak{N}$ and $\Delta(B(a,\delta_a)) \subseteq B(\Delta(a), \delta_a)$. Since $\Delta|_{B(a,\delta_a)}: B(a,\delta_a) \to B(\Delta(a), \delta_a)$ is a surjective isometry, the final conclusion follows from a celebrated result due to P. Mankiewicz (see \cite[Theorem 2]{Mank1972}).\smallskip

For the final statement, let us take a continuous path $\gamma : [0,1]\to \mathfrak{M}$. For each $t\in [0,1],$ by the first part of the proof, there exist $\delta_t>0$ and a surjective affine isometry $F_{t} : M\to N$ satisfying $B(\gamma(t),\delta_t)\subseteq \mathfrak{M}$, $B(\Delta(\gamma(t)), \delta_t)\subseteq \mathfrak{N}$, $\Delta(B(\gamma(t),\delta_t)) = B(\Delta(\gamma(t)), \delta_t),$ and $\Delta|_{B(\gamma(t),\delta_t)} = F_{t}|_{B(\gamma(t),\delta_t)}$. By applying that $\gamma([0,1])$ is compact we deduce the existence of $t_0=0<t_1<\ldots<t_{n-1}<t_n =1$ such that $\displaystyle \gamma ([0,1]) \subseteq \bigcup_{k=0}^n B(\gamma(t_{k}),\delta_{t_k})$. We claim that $F_{t_{k}} = F_{t_{l}}$ for all $k,l\in \{0,\ldots,n\}$. We observe that two surjective affine isometries from $M$ onto $N$ coinciding on an open subset must be the same. If the open set $B(\gamma(t_{k}),\delta_{t_k})\cap B(\gamma(t_{l}),\delta_{t_l})$ is non-empty we clearly have $F_{t_{k}} = F_{t_{l}}$, otherwise, by the connectedness of $\gamma([0,1])$, we can find a finite collection $t_k= t_{j_1},\ldots,t_{j_m}=t_{l}\in \{t_0=0,t_1,\ldots,t_{n-1},t_n\}$ such that $$B(\gamma(t_{j_k}),\delta_{t_{j_k}})\cap B(\gamma(t_{j_{k+1}}),\delta_{t_{j_{k+1}}})\neq \emptyset,$$ for all $k\in \{1,\ldots, m-1\}$. It then follows that $F_{t_{k}} = F_{t_{j_1}} = F_{t_{j_{2}}} = \ldots= F_{t_{j_m}} = F_{t_{l}}$, which gives the desired statement. Therefore the mapping $F= F_{t_0}: M\to N$ is a surjective affine isometry, $\displaystyle U = \bigcup_{k=0}^n B(\gamma(t_{k}),\delta_{t_k})$ is an open neighborhood of $ \gamma ([0,1])$ and $F|_{U} = \Delta|_{U}$ because $\Delta|_{B(\gamma(t_{k}),\delta_{t_k})} = F_{t_{k}}|_{B(\gamma(t_{k}),\delta_{t_k})} = F|_{B(\gamma(t_{k}),\delta_{t_k})}$ for all $0\leq k\leq n$.
\end{proof}

We continue with a proposition showing that, as in the case of associative Banach algebras \cite{Hat2011}, each surjective isometry between the sets $\mathfrak{M}$ and $\mathfrak{N}$ admits a limit in zero and this limit is precisely an element in the McCrimmon radical of the Jordan-Banach algebra $N$.

\begin{proposition}\label{p 1} Let $\Delta : \mathfrak{M}\to \mathfrak{N}$ be a surjective isometry. Then the limit $\displaystyle \lim_{\mathfrak{M}\ni a\to 0} \Delta(a)$ exists, and its value is an element $u_0$ in the McCrimmon radical of $N$.
\end{proposition}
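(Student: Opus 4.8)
The plan is to extract the existence of the limit from the local affine rigidity of $\Delta$ provided by Lemma~\ref{l Delta is locally affine}, and then to identify the limit as a radical element via the spectral characterisation in Proposition~\ref{p Hatori-Aupetit Jordan}. Observe first that $\textbf{1}\in\mathfrak{M}$ and $(\mathbb{C}\setminus\{0\})\mathfrak{M}=\mathfrak{M}$, so $t\,\textbf{1}\in\mathfrak{M}$ for every $t>0$ and $0$ is a genuine limit point of $\mathfrak{M}$. Fix $a\in\mathfrak{M}$. Each segment $\gamma_n:[0,1]\to\mathfrak{M}$, $\gamma_n(\sigma)=\big(\tfrac1n+\sigma(1-\tfrac1n)\big)a$, joining $\tfrac1n a$ to $a$ inside $\mathfrak{M}$, yields by Lemma~\ref{l Delta is locally affine} a surjective affine isometry coinciding with $\Delta$ on an open neighbourhood of its image; as all these neighbourhoods contain a common ball around $a$ and two surjective affine isometries $M\to N$ agreeing on a nonempty open set must coincide, they glue to one surjective affine isometry $F_a:M\to N$ with $\Delta(ta)=F_a(ta)$ for all $t\in(0,1]$. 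Affinity then gives $\Delta(ta)=t\,F_a(a)+(1-t)\,F_a(0)=t\,\Delta(a)+(1-t)\,F_a(0)$, and in particular $\lim_{t\to0^+}\Delta(ta)=F_a(0)$.

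Second, I would upgrade these ray limits to the limit at $0$ using only the isometry condition. If $a,a'\in\mathfrak{M}$ then $\|\Delta(ta)-\Delta(ta')\|=t\|a-a'\|\to0$, so $F_a(0)=F_{a'}(0)=:u_0$ is independent of $a$; and if $x_n\to0$ in $\mathfrak{M}$, then putting $t_n=\|x_n\|>0$ we have $t_n\textbf{1}\in\mathfrak{M}$, $\|x_n-t_n\textbf{1}\|\le 2t_n\to0$ and $\Delta(t_n\textbf{1})\to u_0$, whence $\Delta(x_n)\to u_0$. Thus $\lim_{\mathfrak{M}\ni a\to0}\Delta(a)=u_0$ exists, and the identity of the first paragraph becomes
\[
\Delta(ta)=t\,\Delta(a)+(1-t)\,u_0 \qquad (a\in\mathfrak{M},\ t\in(0,1]).
\]

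Third, to prove $u_0\in\mathcal{R}ad(N)$ I would verify condition $(d)$ in Proposition~\ref{p Hatori-Aupetit Jordan}, namely that $\rho(U_b(u_0))=0$ for every $b\in N_{\textbf{1}}^{-1}$. The engine is that $N_{\textbf{1}}^{-1}\subseteq\mathfrak{N}$ is invariant under multiplication by nonzero complex scalars and, by Lemma~\ref{l principal component as the least quadratic subset of invertible elements}, under $b\mapsto b^{-2}$. Given $d\in N_{\textbf{1}}^{-1}$ and $\lambda\in\mathbb{C}\setminus\{0\}$, pick $a_\lambda\in\mathfrak{M}$ with $\Delta(a_\lambda)=\lambda d$ (possible since $\lambda d\in N_{\textbf{1}}^{-1}\subseteq\mathfrak{N}$ and $\Delta$ is onto $\mathfrak{N}$); the displayed identity with $t=\tfrac12$ gives $\tfrac12\lambda d+\tfrac12 u_0=\Delta(\tfrac12 a_\lambda)\in\mathfrak{N}\subseteq N^{-1}$, and multiplying by $2\lambda^{-1}$ yields $d+\lambda^{-1}u_0\in N^{-1}$. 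Letting $\lambda^{-1}$ range over $\mathbb{C}\setminus\{0\}$ (the value $0$ being trivial) we get $d+\mu u_0\in N^{-1}$ for all $\mu\in\mathbb{C}$, for every $d\in N_{\textbf{1}}^{-1}$. Now apply this with $d=b^{-2}$: since $U_b$ is linear, carries invertible elements to invertible elements (as $b\in N^{-1}$), and satisfies $U_b(b^{-2})=U_bU_{b^{-1}}(\textbf{1})=\textbf{1}$, we obtain $\textbf{1}-\beta\,U_b(u_0)=U_b(b^{-2}-\beta u_0)\in N^{-1}$ for every $\beta\in\mathbb{C}$, so $Sp(U_b(u_0))=\{0\}$ and $\rho(U_b(u_0))=0$. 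Proposition~\ref{p Hatori-Aupetit Jordan} then delivers $u_0\in\mathcal{R}ad(N)$.

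The step I expect to be the real obstacle is this last one. The affine identity alone only shows that $d+su_0$ stays invertible as $s$ runs over a real half-line of parameters, which is far too weak; the decisive idea is to exploit the invariance of $N_{\textbf{1}}^{-1}$ under nonzero scalars to rescale and thereby reach all complex coefficients $\mu$, and then to choose $d=b^{-2}$ exactly so that $U_b$ transports $\{\,b^{-2}-\beta u_0 : \beta\in\mathbb{C}\,\}$ onto $\{\,\textbf{1}-\beta U_b(u_0) : \beta\in\mathbb{C}\,\}$, which is precisely the input required by Proposition~\ref{p Hatori-Aupetit Jordan}. The only other point needing a little care is the gluing of the local affine isometries along the half-open ray $(0,1]a$, but this is routine once Lemma~\ref{l Delta is locally affine} is in hand.
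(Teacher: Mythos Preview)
Your argument is correct and follows essentially the same route as the paper: both proofs obtain the half-point identity $\Delta(\tfrac{c}{2})=\tfrac{1}{2}(\Delta(c)+u_0)$ from the local affine rigidity of Lemma~\ref{l Delta is locally affine}, apply it to a preimage of a scalar multiple of $b^{-2}$, and then push through $U_b$ to force $Sp(U_b(u_0))=\{0\}$ and invoke Proposition~\ref{p Hatori-Aupetit Jordan}. The only organisational difference is that the paper handles the bare existence of $u_0$ by a one-line Cauchy argument (isometries send Cauchy sequences to Cauchy sequences) and argues the radical step by contradiction, whereas you front-load the affine structure to get the ray formula $\Delta(ta)=t\Delta(a)+(1-t)u_0$ first and proceed directly; neither change is substantive.
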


\begin{proof} Let $(a_n)_n$ be a sequence in $\mathfrak{M}$ converging to $0$. By applying that $\Delta$ is an isometry, we can easily prove that $(\Delta(a_n))_n$ is a Cauchy sequence in $N$, therefore there exists $u_a\in N$, depending on $(a_n)_n$, such that $(\Delta(a_n))_n\to u_a$. If $(b_n)_n$ is any other null sequence in $\mathfrak{M}$, we can similarly prove that $(\Delta(b_n))_n$ converges to some $u_b\in N$. The hypotheses on $\Delta$ assure that $$\| \Delta(a_n)- \Delta(b_n)\| = \|a_n-b_n\|\to 0,$$ and hence $u_a = u_b$.\smallskip

We shall finally prove that $u_0\in \mathcal{R}ad(N)$. Since $\mathfrak{M}$ is a quadratic subset of $M^{-1}$ and $\exp(M)\subset \mathfrak{M}$, given $c\in \mathfrak{M}$ and $\alpha\in \mathbb{C}\backslash\{0\}$, by taking $\beta\in \mathbb{C}\backslash\{0\}$ with $\beta^2 =\alpha$ we deduce that  $\alpha c = U_{\beta 1} (c) \in \mathfrak{M}$.\smallskip

Fix an arbitrary $b\in N_{\mathbf{1}}^{-1}\subseteq \mathfrak{N}$ and $\lambda \in Sp(U_b(u_0))\backslash\{0\}$. Since $-\lambda b^{-2}\in N_{\mathbf{1}}^{-1}\subseteq \mathfrak{N}$ (cf. Lemma \ref{l principal component as the least quadratic subset of invertible elements} and Remark \ref{r properties of the identity component Jordan}), by the surjectivity of $\Delta$ there exists (a unique) $c_{\lambda}\in \mathfrak{M}$ such that $\Delta(c_{\lambda}) = - \lambda b^{-2}$.\smallskip

As we have mentioned several times before, $t c_{\lambda}\in \mathfrak{M}$ for every $t \in \mathbb{R}\backslash\{0\}$. Fix an arbitrary $0<s<\frac12$ in $\mathbb{R}$, by applying Lemma \ref{l Delta is locally affine} to the continuous path $\gamma(t) = t s c_{\lambda} + (1-t) (1-s) c_{\lambda}$, we can find an open neighborhood $U$ of $\gamma([0,1])$ with $U \subseteq \mathfrak{M}$ and a surjective affine isometry $F: M\to N$ such that $\Delta|_{U} = F|_{U}$. Since $F$ is affine on $U$ and $\gamma([0,1])\subseteq U$ we deduce that $$\begin{aligned}\Delta \left(\frac{c_{\lambda}}{2}\right) &= F \left(\frac12 s c_{\lambda} + \frac12 (1-s) c_{\lambda}\right) = \frac{F((1-s) c_{\lambda}) +F (s c_{\lambda})}{2} \\
&= \frac{\Delta((1-s) c_{\lambda}) +\Delta (s c_{\lambda})}{2}.
\end{aligned}$$

Since $s$ was an arbitrary real number in $(0,\frac12)$, by taking $\displaystyle \lim_{s\to 0^+}$ in the above identity and applying that $\Delta$ is continuous and the first part of this proof, we get $$\Delta \left(\frac{c_{\lambda}}{2}\right) = \frac{\Delta(c_{\lambda}) + u_0}{2} = \frac{- \lambda b^{-2} + u_0}{2}.$$ Evaluating the mapping $U_b$ at both sides of the previous equality, and having in mind that $\mathfrak{N}$ is a quadratic set, we deduce that $$ - \lambda \textbf{1} + U_{b} (u_0) = -\lambda U_b (b^{-2}) + U_{b} (u_0) = 2 U_b \left( \Delta \left(\frac{c_{\lambda}}{2}\right)\right) \in U_{\mathfrak{N}} (\mathfrak{N})\subseteq \mathfrak{N}\subseteq N^{-1},$$ witnessing that $\lambda\notin Sp (U_b(u_0))$ and leading to a contradiction. We have therefore shown that $Sp(U_b(u_0))=\{0\}$. The arbitrariness of $b\in N_{\mathbf{1}}^{-1}\subseteq \mathfrak{N}$ and Proposition \ref{p Hatori-Aupetit Jordan} guarantee that $u_0$ belongs to $\mathcal{R}ad (N)$.\end{proof}

\begin{lemma}\label{l N-u0 in N} Let $N$ be a unital Jordan-Banach algebra, and let $\mathfrak{N}\subseteq N^{-1}$ be a clopen subset which is a quadratic subset of $N^{-1}$ and is closed for powers and inverses. Then for each element $u_0$ in the McCrimmon radical of $N$ the identity $\mathfrak{N}-u_0 =\mathfrak{N}$ holds.
\end{lemma}

\begin{proof} Let us begin with an observation. Fix $b\in \mathfrak{N}$. Let $N_{b}$ be the homotope algebra with unit $b^{-1}\in \mathfrak{N}$. By \cite[Proposition 3]{McCrimm71} we have $$\mathcal{R}ad(N_b) = \{z\in N : U_{b} (z)\in \mathcal{R}ad(N)\},$$ and consequently $U_{b^{-1}} (u_0) = U_{b}^{-1} (u_0) \in \mathcal{R}ad(N_b)$. It follows that $$\rho_{_{N_b}}(U^{(b)}_{x} (U_{b^{-1}} (u_0)))=0, \hbox{ for all $x\in N_b$ (cf. Theorem \ref{t Aupetit c 1}).}$$ In particular 
 $\rho_{_{N_b}}(t U^{(b)}_{b^{-1}} (U_{b^{-1}} (u_0)))=0$, and thus $$ 
 t U^{(b)}_{b^{-1}} (U_{b^{-1}} (u_0)) +b^{-1}\in N_b^{-1},$$ for all $
 t\in \mathbb{R}_0^+$. Having in mind that $b^{-1}$ is the unit of the homotope $N_{b}$, we deduce that $U^{(b)}_{b^{-1}}$ is the identity mapping, and hence  $t U_{b^{-1}} (u_0) +b^{-1}\in (N_b)^{-1} = N^{-1},$ for all $t\in \mathbb{R}_0^+$. Therefore $U_{b^{-1}} (u_0) +b^{-1}$ is connected with $b^{-1}$ by a continuous path contained in $N_b^{-1}= N^{-1}$. It then follows that $U_{b^{-1}} (u_0) +b^{-1}$ is in the connected component of $N^{-1}$ containing $b^{-1}$, and thus the element $$ u_0 +b = U_b(  U_{b^{-1}} (u_0) +b^{-1}) $$ is in the connected component of $N^{-1}$ containing $b,$ and hence $u_0 +b \in \mathfrak{N}$.
 The arbitrariness of $b$ and the fact that $-b = U_{i b} (b^{-1})\in \mathfrak{N}$ give the desired conclusion.
\end{proof}

The next technical result might be known, it is included here due to the lacking of a concrete reference.

\begin{lemma}\label{l affine on segements} Let $X$ be a Banach space. Suppose $G:\mathbb{C} \to X$ is a continuous mapping satisfying:
\begin{enumerate}[$(a)$]\item $G(0) =0;$
\item The restriction of $G$ to each segment not containing zero is an affine map.
\end{enumerate} Then $G$ is real-linear.
\end{lemma}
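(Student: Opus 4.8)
The plan is to identify $\mathbb{C}$ with $\mathbb{R}^2$ and to establish real-linearity of $G$ in four stages: positive homogeneity, additivity on non-collinear pairs, the oddness relation $G(-z)=-G(z)$, and finally full additivity; real-linearity then drops out by combining additivity with homogeneity.

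First I would show that $G(rz)=r\,G(z)$ for every $z\in\mathbb{C}$ and every $r>0$ (the case $z=0$ being trivial). Fix $z\neq 0$. Applying hypothesis $(b)$ to the segments $[az,bz]$ with $0<a<b$ — none of which contains $0$ — the map $r\mapsto G(rz)$ is affine on every compact subinterval of $(0,+\infty)$, hence there exist $A,B\in X$ with $G(rz)=A+rB$ for all $r>0$. Letting $r\to 0^+$ and using the continuity of $G$ together with $G(0)=0$ forces $A=0$, and evaluating at $r=1$ gives $B=G(z)$; thus $G(rz)=rG(z)$ for all $r>0$.

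Next I would prove that $G(z+w)=G(z)+G(w)$ whenever $0,z,w$ are not collinear. Consider the parallelogram with vertices $0,z,z+w,w$: its diagonals $[0,z+w]$ and $[z,w]$ meet at their common midpoint $m=\tfrac12(z+w)$. The segment $[z,w]$ does not contain $0$ (this is exactly the non-collinearity), so affineness of $G$ on $[z,w]$ evaluated at the midpoint gives $G(m)=\tfrac12(G(z)+G(w))$, while the previous step gives $G(m)=G(\tfrac12(z+w))=\tfrac12 G(z+w)$; comparing yields the claim. From this I would deduce $G(-z)=-G(z)$ for all $z$: picking $u$ linearly independent from $z$ (say $u=iz$, assuming $z\neq 0$), both pairs $\{z,u\}$ and $\{z+u,-z\}$ span, so $G(u)=G\big((z+u)+(-z)\big)=G(z+u)+G(-z)=G(z)+G(u)+G(-z)$, whence $G(-z)+G(z)=0$.

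To finish I would handle additivity for collinear pairs by routing the sum through an auxiliary vector: if $w=\lambda z$ with $z\neq 0$ and $\lambda\notin\{0,-1\}$, choose $u$ linearly independent from $z$ and write $z+w=(z-u)+(u+\lambda z)$; the three pairs $\{z-u,u+\lambda z\}$, $\{z,-u\}$, $\{u,\lambda z\}$ are each non-collinear with $0$ (a short linear-algebra check using $\lambda\notin\{0,-1\}$), so three applications of the non-collinear additivity together with the cancellation $G(u)+G(-u)=0$ give $G(z+w)=G(z)+G(\lambda z)=G(z)+G(w)$; the remaining cases $\lambda=0$ and $\lambda=-1$ follow from $G(0)=0$ and oddness. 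Hence $G$ is additive on all of $\mathbb{C}$, and then for $r<0$ one has $G(rz)=-G((-r)z)=-(-r)G(z)=rG(z)$, so $G$ is real-homogeneous as well; additive plus real-homogeneous means real-linear. The step requiring the most care — and the only one that is not essentially immediate — is the collinear case of additivity, since there the parallelogram argument degenerates and one must pass through a linearly independent auxiliary vector while keeping track of which intermediate pairs remain non-collinear; fortunately the $2$-dimensionality of $\mathbb{C}$ keeps all of this bookkeeping elementary.
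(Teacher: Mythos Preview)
Your proof is correct. Both your argument and the paper's hinge on the midpoint identity and on using continuity at $0$ to pass from segments avoiding the origin to a global statement, but the two proofs are organised quite differently. The paper first establishes oddness $G(-\lambda)=-G(\lambda)$ by a limiting trick (applying affineness on the shifted segments $[\lambda+i\lambda/n,\,-\lambda+i\lambda/n]$ and letting $n\to\infty$), then upgrades affineness to segments $[0,\lambda]$ by approximating with $[s\lambda,\lambda]$ and sending $s\to 0$, and only afterwards reads off homogeneity and additivity. You instead go straight to positive homogeneity on each ray (your single use of continuity), obtain additivity for non-collinear pairs via the parallelogram midpoint, and then \emph{derive} oddness as a consequence of that additivity; the collinear case you handle by routing through an auxiliary independent vector. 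Your route uses continuity only once and is a bit more algebraic in flavour, at the price of the small linear-algebra bookkeeping in the collinear step; the paper's route avoids that bookkeeping by showing directly that $G$ is affine on \emph{every} segment, including those through $0$, but invokes continuity in two separate limiting arguments. Either approach is perfectly adequate for the lemma.
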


\begin{proof} We begin by proving that \begin{equation}\label{eq G preserves antipodal} G(-\lambda) = -G(\lambda) \hbox{ for all } \lambda\in \mathbb{C}.
\end{equation} We can clearly assume that $\lambda\neq 0$. Since for $n$ big enough $0\notin [\lambda + i \frac{\lambda}{n},-\lambda + i \frac{\lambda}{n}]$, it follows from the hypothesis that $$G\left( i \frac{\lambda}{n} \right) =\frac{1}{2} G\left( \lambda + i \frac{\lambda}{n} \right) + \frac{1}{2} G\left(-\lambda + i \frac{\lambda}{n} \right).$$ Taking limit $n\to +\infty$ and applying that $G$ is continuous we derive $0 = G(0) = G(\lambda) + G(-\lambda).$\smallskip

We shall next show that \begin{equation}\label{eq Ga ffine on segement with extreme 0} \hbox{$G$ is affine (i.e., real-linear) on every segment of the form $[0,\lambda]$ with $\lambda \neq 0$.}
 \end{equation}Namely, given $0<t\leq 1$, we need to show that $G(t 0 + (1-t) \lambda) = G((1-t) \lambda) = t G(0) +(1-t) G( \lambda) =(1-t) G( \lambda)$. Pick an arbitrary $s$ with $0<s<t$. We observe that $(1-t) \lambda = r (s \lambda) + (1-r) \lambda$ for $r = \frac{t}{1-s}\in (0,1)$. By applying that $G|_{[s\lambda, \lambda]}$ is affine we get $$\begin{aligned}G((1-t) \lambda) &= G(r (s \lambda) + (1-r) \lambda) = r G(s \lambda) + (1-r) G(\lambda) \\
&= \frac{t}{1-s} G(s \lambda) + \frac{1-s-t}{1-s} G(\lambda).
\end{aligned}$$ Since $s$ was an arbitrary element in $(0,t),$ and $G$ is continuous, we can take limit $s\to 0$ in the above identity to deduce that
$ G((1-t) \lambda)=  (1-t) G(\lambda)$ as desired.\smallskip

It is now easy to check that \begin{equation}\label{eq G is homogeneous} G(\alpha \lambda) = \alpha G(\lambda) \hbox{ for all } \alpha\in \mathbb{R}, \lambda\in \mathbb{C}.
\end{equation} We can assume that $\lambda\neq 0,$ and by \eqref{eq G preserves antipodal} that $\alpha>0$. Indeed, if $0<\alpha<1$, since $G$ is affine on $[0,\lambda]$ we have $G(\alpha \lambda) = G(\alpha \lambda + (1-\alpha) 0) = \alpha G(\lambda) + (1-\alpha) G(0).$ If $\alpha>1$, by applying that $G$ is affine on $[0,\alpha \lambda]$ we get $G(\lambda)= G( \frac{1}{\alpha}\alpha \lambda +(1-\frac{1}{\alpha}) 0 ) = \frac{1}{\alpha} G( \alpha \lambda)$. The case $\alpha =1$ is clear.\smallskip

Suppose next that $0$ lies in a segment of the form $[-\alpha \lambda, \beta \lambda]$ for some $\alpha,\beta>0$ and $\lambda\in \mathbb{C}\backslash\{0\}$. Given $t\in (0,1)$ we want to prove that  $G(t (-\alpha \lambda) + (1-t) \beta \lambda) = t G(-\alpha \lambda) + (1-t) G(\beta \lambda)$. Suppose $ - t \alpha + (1-t) \beta =0$. The left-hand-side term in the desired equality is $G(0)=0$, while in the right-hand-side we have $$t G(-\alpha \lambda) + (1-t) G(\beta \lambda) =\hbox{(by \eqref{eq G is homogeneous})}= - t\alpha G( \lambda) + (1-t)\beta G( \lambda) =0.$$  Assume next that $ - t \alpha + (1-t) \beta \neq 0$. In this case we have $$\begin{aligned}G(t (-\alpha \lambda) + (1-t) \beta \lambda) &= (- t \alpha + (1-t) \beta) G(\lambda) = - t \alpha G(\lambda) + (1-t) \beta G(\lambda) \\
&= t G(-\alpha \lambda) + (1-t) G(\beta \lambda),
\end{aligned}$$ where in the first and third equality we applied \eqref{eq G is homogeneous}.\smallskip

We have proven that $G$ is real homogeneous and affine on every segment. Finally, given $\lambda,\mu\in \mathbb{C}$, it is easy to see from these properties that  $$ \frac{G(\lambda+\mu)}{2} =G\left(\frac{\lambda+\mu}{2}\right) = \frac12 (G(\lambda)+G(\mu)).$$
\end{proof}

Let us briefly equip the reader with some basic notions on numerical ranges. A (real or complex) \emph{numerical range space} is a pair $(X,u),$ where $X$ is a (real or complex) Banach space $X$ and $u$ is a fixed element in the unit sphere of $X$. The \emph{state space} of $(X,u)$ is the set $$D(X) = D(X,u) = \{\phi \in X^* : \|\phi \| = \phi (u) =1 \}.$$ The \emph{numerical range} of an element $x\in X$ is the non-empty compact and convex set defined by $$V(X,x) = V(x) =\{\phi (x) : \phi \in D(X,u)\}.$$ The \emph{numerical radius} of $x\in X$ is the number given by $$v(x) = \max\{ |\lambda| : \lambda\in V(x)\},$$ while the \emph{numerical index} of $X$ is defined by $$n(X,u)= n(X) = \inf\{ v(x) : x\in X, \ \|x\|=1 \} $$ $$= \max\{ \alpha \geq 0 : \alpha \|x\| \leq v(x) \hbox{ for all } x\in X\}.$$ The element $u$ is called a \emph{geometrically unitary} element of $X$ if and only if $n(X, u) > 0$. The celebrated Bohnenblust--Karlin theorem asserts that if $A$ is a norm-unital Banach algebra with unit $\textbf{1}$, then the numerical radius is a norm on $A$ which is equivalent to the original norm of $A$, furthermore $n(A,\textbf{1}) \geq \frac1e$ and thus $v(a) \leq \|a\|\leq e \ v(a)$ for all $a\in A$ (cf. \cite[Theorem 2.6.4]{Pal94}). It is known that the requirement concerning associativity of $A$ in the previous result can be relaxed. Namely, suppose $M$ is a norm-unital (non-necessarily associative) normed complex algebra. Then $n(M,\textbf{1})\geq \frac1e$, and thus $v(a) \leq \|a\|\leq e \ v(a)$ for all $a\in M$ (see \cite[Proposition 2.1.11]{Cabrera-Rodriguez-vol1})
.\smallskip

We can establish next a Jordan version of the result proved by O. Hatori in \cite[Theorem 3.2]{Hat2011}.

\begin{theorem}\label{t surjective isometries between invertible clopen} Let $M$ and $N$ be unital Jordan-Banach algebras. Suppose that $\mathfrak{M}\subseteq M^{-1}$ and $\mathfrak{N}\subseteq N^{-1}$ are clopen quadratic subsets of $M^{-1}$ and $N^{-1}$, respectively, which are closed for powers and inverses. Let $\Delta : \mathfrak{M}\to \mathfrak{N}$ be a surjective isometry. Then for each $w_0$ in the McCrimmon radical of $N$ the mapping $a\mapsto \Delta (a) - w_0$ is a surjective isometry from $\mathfrak{M}$ to $\mathfrak{N}.$ Furthermore, there exist a surjective real-linear isometry $T_0: M\to N$ and an element $u_0$ in the McCrimmon radical of $N$ such that $\Delta (a) = T_0(a) +u_0$ for all $a\in \mathfrak{M}$.
\end{theorem}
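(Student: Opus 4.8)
The plan is to peel off the additive term, read off a candidate real-linear isometry $T_0$ from the behaviour of $\Delta$ near $\textbf{1}$, and then---this being the delicate point---show that this single $T_0$ governs $\Delta$ on \emph{every} connected component of $\mathfrak{M}$, even though most of these components cannot be joined to $M_{\textbf{1}}^{-1}$ by a path lying inside $\mathfrak{M}$.

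First I would reduce to a map vanishing at $0$. By Proposition \ref{p 1} the limit $u_0=\lim_{\mathfrak{M}\ni a\to 0}\Delta(a)$ exists and belongs to $\mathcal{R}ad(N)$, and by Lemma \ref{l N-u0 in N} the translate $\Delta_0:=\Delta-u_0:\mathfrak{M}\to\mathfrak{N}$ is again a surjective isometry, which extends continuously to $\mathfrak{M}\cup\{0\}$ by $\Delta_0(0)=0$; it thus suffices to produce a surjective real-linear isometry $T_0:M\to N$ with $\Delta_0=T_0$ on $\mathfrak{M}$. Recall $M_{\textbf{1}}^{-1}\subseteq\mathfrak{M}$. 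Applying the first part of Lemma \ref{l Delta is locally affine} at $\textbf{1}$ and subtracting $u_0$, there are a surjective real-linear isometry $T_0:M\to N$ and an element $c_0\in N$ with $\Delta_0=T_0(\cdot)+c_0$ on a neighbourhood of $\textbf{1}$; since $M_{\textbf{1}}^{-1}$ is open, connected and locally path-connected, the path part of Lemma \ref{l Delta is locally affine} propagates this to $\Delta_0(x)=T_0(x)+c_0$ for all $x\in M_{\textbf{1}}^{-1}$.

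Next I would record the radial behaviour of $\Delta_0$ and use it to kill $c_0$. Fix $a\in\mathfrak{M}$; since $(\mathbb{C}\setminus\{0\})\,a\subseteq\mathfrak{M}$ (Remark \ref{r properties of the identity component Jordan}, Lemma \ref{l principal component as the least quadratic subset of invertible elements}) and $\lambda\mapsto\lambda a$ carries each segment of $\mathbb{C}$ avoiding $0$ onto a segment contained in $\mathfrak{M}$---on which $\Delta_0$ is affine by Lemma \ref{l Delta is locally affine}---the continuous map $G_a:\mathbb{C}\to N$, $G_a(\lambda)=\Delta_0(\lambda a)$ for $\lambda\ne 0$ and $G_a(0)=0$, satisfies the hypotheses of Lemma \ref{l affine on segements} and is therefore real-linear. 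With $a=\textbf{1}$, the affine maps $\lambda\mapsto G_{\textbf{1}}(\lambda)$ and $\lambda\mapsto T_0(\lambda\textbf{1})+c_0$ agree near $\lambda=1$, hence on all of $\mathbb{C}$; at $\lambda=0$ this forces $c_0=0$, so $\Delta_0=T_0$ on $M_{\textbf{1}}^{-1}$.

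The hard part is to extend $\Delta_0=T_0$ from $M_{\textbf{1}}^{-1}$ to all of $\mathfrak{M}$, where path-gluing is useless; here I would exploit the rigidity of the unital numerical range $V(M,\cdot)$ of $M$. For $h\in M$, $\theta\in\mathbb{R}$ and $R>\|h\|$ the element $Re^{i\theta}\textbf{1}+h$ is invertible and, being a nonzero scalar multiple of an element at distance $<1$ from $\textbf{1}$, lies in $M_{\textbf{1}}^{-1}$, so $\Delta_0(Re^{i\theta}\textbf{1}+h)=T_0(Re^{i\theta}\textbf{1}+h)=R\,T_0(e^{i\theta}\textbf{1})+T_0(h)$. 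Fix $g\in\mathfrak{M}$. Applying that $\Delta_0$ is an isometry, then the surjective real-linear isometry $T_0^{-1}:N\to M$, one gets
$$\bigl\| Re^{i\theta}\textbf{1}+h-T_0^{-1}(\Delta_0(g)) \bigr\| = \bigl\| T_0(Re^{i\theta}\textbf{1}+h)-\Delta_0(g) \bigr\| = \bigl\| Re^{i\theta}\textbf{1}+h-g \bigr\|$$
for all such $h,\theta,R$. Writing $w:=T_0^{-1}(\Delta_0(g))-g$, replacing $h$ by $h+g$, using $\|Re^{i\theta}\textbf{1}+y\|=\|R\textbf{1}+e^{-i\theta}y\|$, subtracting $R$ and letting $R\to+\infty$ (recall $\lim_{R\to+\infty}(\|R\textbf{1}+y\|-R)=\max\{\mathrm{Re}\,\mu:\mu\in V(M,y)\}$), I obtain
$$\max\,\mathrm{Re}\,V\bigl(M, e^{-i\theta}(h-w)\bigr)=\max\,\mathrm{Re}\,V\bigl(M, e^{-i\theta}h\bigr)\qquad(h\in M,\ \theta\in\mathbb{R}).$$
Hence the sublinear functional $x\mapsto\max\,\mathrm{Re}\,V(M,x)=\sup\{\mathrm{Re}\,\phi(x):\phi\in D(M,\textbf{1})\}$ is invariant under translation by $e^{-i\theta}w$ for every $\theta$; iterating the translation and using that the functional is finite forces $\mathrm{Re}\,\phi(e^{-i\theta}w)=0$ for all $\phi\in D(M,\textbf{1})$ and all $\theta$, whence $\phi(w)=0$ for every $\phi\in D(M,\textbf{1})$, i.e. $v(w)=0$. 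Since $M$ is norm-unital, the Bohnenblust--Karlin estimate gives $\|w\|\le e\,v(w)=0$, so $w=0$ and $\Delta_0(g)=T_0(g)$. As $g$ was arbitrary, $\Delta(a)=T_0(a)+u_0$ on $\mathfrak{M}$, with $T_0$ a surjective real-linear isometry and $u_0\in\mathcal{R}ad(N)$, which is the assertion. The only point demanding genuine care is the possible disconnectedness of $\mathfrak{M}$ (e.g.\ $\mathfrak{M}=M^{-1}$ with $M=C(\mathbb{T})$): the numerical-range argument of this last paragraph is exactly what ties the non-principal components of $\mathfrak{M}$ to $M_{\textbf{1}}^{-1}$.
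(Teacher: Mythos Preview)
Your argument is correct and follows essentially the same route as the paper: subtract $u_0$ via Proposition~\ref{p 1} and Lemma~\ref{l N-u0 in N}, use Lemma~\ref{l Delta is locally affine} together with Lemma~\ref{l affine on segements} to produce a single surjective real-linear isometry $T_0$ agreeing with $\Delta_0$ on the principal component, and then invoke the Bohnenblust--Karlin bound to force $\Delta_0=T_0$ on every component of $\mathfrak{M}$. The only differences are cosmetic---you propagate $T_0$ to all of $M_{\textbf{1}}^{-1}$ by path-gluing instead of applying Mankiewicz globally on the convex cone $\Omega=\bigcup_{\alpha>0}B(\alpha\textbf{1},\alpha)$, and you run the numerical-range step asymptotically (using $\lim_{R\to\infty}(\|R\textbf{1}+y\|-R)=\max\mathrm{Re}\,V(M,y)$ and translation invariance) rather than evaluating directly at $R=2\|a\|$ with the inequality $|2\|a\|\pm\phi(w)|\le 2\|a\|\Rightarrow\phi(w)=0$ as the paper does.
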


\begin{proof} The first statement follows from Lemma \ref{l N-u0 in N} and the fact that the translation by a fixed vector is a surjective isometry.\smallskip

Let $u_0\in \mathcal{R}ad (N)$ be the element given by Proposition \ref{p 1}. We deduce from the first conclusion that the mapping $\Delta_0 : \mathfrak{M}\to \mathfrak{N},$ $\Delta_0 (a) = \Delta (a) - u_0$ is a surjective isometry. Clearly, $\displaystyle \lim_{\mathfrak{M}\ni a\to 0} \Delta_0(a) = 0.$\smallskip

We claim that for each $a\in \mathfrak{M}$ the mapping \begin{equation}\label{eq Delta0 is real linear} \lambda \mapsto G(\lambda) = \Delta_0 (\lambda a) \hbox{ for } \lambda\in \mathbb{C}\backslash\{0\}, \hbox{ and } G(0)=0, \hbox{ is real-linear.}
\end{equation} Clearly $G$ is continuous, and if $[\lambda,\mu]$ is a segment in $\mathbb{C}$ not containing zero, by applying Lemma \ref{l Delta is locally affine} to the continuous path $\gamma: [0,1]\to \mathfrak{M}$, $\gamma(t) = t \lambda a + (1-t) \mu a$ we can deduce that $\Delta|_{[\lambda a ,\mu a]}$ is an affine mapping because $[\lambda a,\mu a]$ is convex. Lemma \ref{l affine on segements} proves the statement in \eqref{eq Delta0 is real linear}.\smallskip

Suppose $a,b\in \mathfrak{M}$ satisfy that $[a,b]\subset \mathfrak{M}$. Lemma \ref{l Delta is locally affine} can be applied to deduce that $\Delta|_{[a,b]}$ is affine. Combining this property with \eqref{eq Delta0 is real linear} we obtain \begin{equation}\label{eq Delta0 is additive on elements whose segment is in mathfrakM} \frac12 \Delta_0 (a+b) = \Delta_0\left(\frac{a+b}{2}\right) = \frac{\Delta_0(a)+\Delta_0(b)}{2},\ \forall a,b\in M \hbox{ with } [a,b]\subset \mathfrak{M}.
\end{equation}

Alike in the proof of the associative version of our result (see \cite[proof of Theorem 3.2]{Hat2011}), we consider the open convex subset $$\Omega = \bigcup_{\alpha>0} B(\alpha \textbf{1}, \alpha) = \{x\in M : \|x - \alpha \textbf{1}\|< \alpha\hbox{ for some } \alpha >0\} \subseteq M_{\mathbf{1}}^{-1}\subset \mathfrak{M}.$$ We claim that $\Delta_0 (\Omega)$ is convex. Namely, combining that $\Omega\subset\mathfrak{M}$ is convex, \eqref{eq Delta0 is real linear} and \eqref{eq Delta0 is additive on elements whose segment is in mathfrakM} we deduce that given $a,b\in \Omega$ and $t\in [0,1]$, we have $t\Delta_0(a) + (1-t) \Delta_0 (b) = \Delta(t a + (1-t) b)\in \Delta(\Omega)$, which proves the claim.\smallskip

The mapping $\Delta_0|_{\Omega} : \Omega \to \Delta_0(\Omega)$ is a surjective isometry between two open convex sets. A celebrated result by P. Mankiewicz \cite[Theorem 5 and Remark 7]{Mank1972} assures the existence of a surjective affine isometry $T_0 : M\to N$ such that $\Delta_0|_{\Omega} = T_0|_{\Omega}$.\smallskip

It is easy to see that for each $a\in \Omega$ and each $t\in \mathbb{R}^+$ the element $t a$ lies in $\Omega$ too. Therefore, $\Delta_0(t a) = T_0(t a)$. Taking limit $t\to 0$ we get $0 = T_0(0)$, which in particular assures that $T_0$ is a surjective real-linear isometry.\smallskip

We shall finally prove that $T_0|_{\mathfrak{M}} = \Delta_0$. To this end pick an arbitrary $a\in \mathfrak{M}$. Since $\|\pm a +2 \|a \| \textbf{1} - 2 \|a\| \textbf{1} \| = \|a\|< 2 \|a\|,$ we deduce that  $\pm a +2 \|a \| \textbf{1}\in \Omega$ and hence $T_{0} (\pm a +2 \|a \| \textbf{1}) = \Delta_0 (\pm a +2 \|a \| \textbf{1})$. Now, having in mind that $T_0$ is a surjective real-linear isometry and that $\Delta_0$ is an isometry, we compute \begin{equation}\label{eq final identity 1 0711} 2 \|a\| = \| a+ 2\|a\| \textbf{1} - a  \| = \|\Delta_0( a+ 2\|a\| \textbf{1}) -\Delta_0 (a) \| = \|  a+ 2\|a\| \textbf{1} -T_0^{-1} \Delta_0 (a) \|,
\end{equation} and by \eqref{eq Delta0 is real linear}
\begin{equation}\label{eq final identity 2 0711} \begin{aligned}
2 \|a\| &= \| - a+2 \|a\| \textbf{1} + a  \| = \|\Delta_0( - a+2 \|a\| \textbf{1}) -\Delta_0 (-a) \| \\ &= \| - a+2 \|a\| \textbf{1} -T_0^{-1} \Delta_0 (-a) \|= \| - a+2 \|a\| \textbf{1} +T_0^{-1} \Delta_0 (a) \|.
\end{aligned}
\end{equation} Let us take a state $\phi\in D(M,\textbf{1}) = \{\phi \in M^* : \|\phi \| = \phi (\textbf{1}) =1 \}$. By applying \eqref{eq final identity 1 0711} and \eqref{eq final identity 2 0711} it follows that $$\begin{aligned}|2 \|a\| \pm \phi (a -T_0^{-1} \Delta_0 (a) )| &= |\phi (2 \|a\| \textbf{1} \pm ( a -T_0^{-1} \Delta_0 (a)))| \\ &\leq \| 2 \|a\| \textbf{1} \pm (a - T_0^{-1} \Delta_0 (a)) \|= 2 \ \|a\|,
\end{aligned},$$ witnessing that $\phi (a -T_0^{-1} \Delta_0 (a) ) =0$. The arbitrariness of $\phi$ proves that the numerical radius of $a -T_0^{-1} \Delta_0 (a)$ is zero, that is,  $v(a -T_0^{-1} \Delta_0 (a))=0$. Finally, as we commented before this theorem, $\|a -T_0^{-1} \Delta_0 (a)\|\leq e \ v(a -T_0^{-1} \Delta_0 (a))=0$ (see \cite[Proposition 2.1.11]{Cabrera-Rodriguez-vol1}), and thus $a =T_0^{-1} \Delta_0 (a),$ which concludes the proof because $\Delta (a) = T_0 (a) + u_0$ for all $a\in \mathfrak{M}$.
\end{proof}

\begin{corollary}\label{c surjective isometries between invertible clopen} Let $M$ and $N$ be unital Jordan-Banach algebras with $N$ Jordan-semisimple. Suppose that $\mathfrak{M}\subseteq M^{-1}$ and $\mathfrak{N}\subseteq N^{-1}$ are clopen quadratic subsets of $M^{-1}$ and $N^{-1}$, respectively, which are closed for powers and inverses. Let $\Delta : \mathfrak{M}\to \mathfrak{N}$ be a surjective isometry. Then there exists a surjective real-linear isometry $T_0: M\to N$ such that $\Delta (a) = T_0(a) $ for all $a\in \mathfrak{M}$.
\end{corollary}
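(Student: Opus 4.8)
The plan is to obtain the statement as an immediate specialization of Theorem \ref{t surjective isometries between invertible clopen}. All of the structural hypotheses placed on $M$, $N$, $\mathfrak{M}$, $\mathfrak{N}$ and $\Delta$ in the corollary are precisely those required in that theorem, so I would first invoke it to produce a surjective real-linear isometry $T_0 : M\to N$ together with an element $u_0\in \mathcal{R}ad(N)$ satisfying $\Delta(a) = T_0(a) + u_0$ for every $a\in \mathfrak{M}$.

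The second and final step is to feed in the extra assumption that $N$ is Jordan-semisimple, which by definition means $\mathcal{R}ad(N) = \{0\}$. This forces $u_0 = 0$, and the decomposition collapses to $\Delta(a) = T_0(a)$ for all $a\in \mathfrak{M}$, which is the desired conclusion, with the same $T_0$ serving as the required surjective real-linear isometry.

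I do not anticipate any genuine obstacle here: the corollary is a one-line consequence of the theorem, and the only ``content'' is recognising that the translation term $u_0$ lives in a radical that has been assumed trivial. If one wished to keep the deduction self-contained, an alternative (equally routine) route would be to rerun the relevant part of the proof of Theorem \ref{t surjective isometries between invertible clopen}: the element $u_0$ furnished by Proposition \ref{p 1} satisfies $\rho(U_b(u_0)) = 0$ for every $b$ in the principal component $N_{\textbf{1}}^{-1}$, hence lies in $\mathcal{R}ad(N) = \{0\}$ by Proposition \ref{p Hatori-Aupetit Jordan}, after which the numerical-range argument at the end of that proof applies verbatim to identify $\Delta$ with $T_0$ on $\mathfrak{M}$. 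Neither route requires any new computation.
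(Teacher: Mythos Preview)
Your proposal is correct and matches the paper's approach exactly: the corollary is stated in the paper without proof, being an immediate consequence of Theorem \ref{t surjective isometries between invertible clopen} together with the definition of Jordan-semisimplicity ($\mathcal{R}ad(N)=\{0\}$), precisely as you outline.
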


\begin{remark}\label{r alternative proofs with convex analysis} We would like to note that the arguments given in the proof of Theorem \ref{t surjective isometries between invertible clopen} are completely independent from those given by O. Hatori in the proof of the same result for associative unital Banach algebras in \cite[Theorem 3.2]{Hat2011}, and the prior study for unital semisimple commutative Banach algebras by the same author in \cite{Hat09}. The proof of \cite[Theorem 3.2]{Hat2011} relies on some technical results \cite[Lemmata 2.1 and 2.2]{Hat2011} based on the new proof of the Mazur--Ulam theorem given by V\"{a}is\"{a}l\"{a} \cite{Vaisala2003}. The proof here, which is also valid for associative unital Banach algebras, does not depend on the commented technical results but on an analysis of the local convex properties of $\Delta$ and an application of a celebrated result due to Mankiewicz \cite{Mank1972}. The arguments seems a bit shorter here than in the commented references.
\end{remark}

\begin{remark}\label{r difficulties in Hat11} The reader should be warned about a non-fully convincing argument in the final part of the proof of \cite[Theorem 3.2]{Hat2011}. More concretely, suppose $A$ is a  unital Banach algebra and $\mathfrak{A}$ is an open multiplicative subgroup of $A^{-1}$. As in the proof of Theorem \ref{t surjective isometries between invertible clopen} the set $$\Omega_{A} = \{x\in A : \|x - \alpha \textbf{1}\|< \alpha\hbox{ for some } \alpha >0\} \subseteq A_{\textbf{1}}^{-1}\subset \mathfrak{A}$$ is an open convex subset, and $a + 2 \|a\| \textbf{1}\in \Omega_{A}$ for all $a\in \mathfrak{A}$. However the element $- 2  i \|a\| \textbf{1}$ does not belong to $\Omega_A$. This produces subtle difficulties in the statements in \cite[pages 89, 90]{Hat2011} affirming that elements of the form $t (a + 2\|a\| \textbf{1}) + (1-t) ( \pm 2 i \|a\| \textbf{1})$ belong to $\Omega_A$ for every $0\leq t\leq 1$. So, the final paragraphs in the proof of \cite[Theorem 3.2]{Hat2011} are affected by these obstacles. The proof is clarified and simplified in the demonstration we gave above by an argument which is also valid in the associative setting. In a private communication O. Hatori indicated to us that the just commented difficulties in the proof of \cite[Theorem 3.2]{Hat2011} can be also avoided by just observing that for each $a\in A$ we have $$\left\|\frac{1}{2 (t\pm (1-t)i ) \|a\|} \left( t(a + \| a\|\textbf{1}) + (1-t) (\pm 2 i \|a\|\textbf{1})  \right) - \textbf{1}\right\|<1,$$ and hence $$\frac{1}{2 (t\pm (1-t)i ) \|a\|} \left( t(a + \| a\|\textbf{1}) + (1-t) (\pm 2 i \|a\|\textbf{1})  \right)\in \Omega_{A} \subseteq A_{\textbf{1}}^{-1}\subset \mathfrak{A}.$$ Therefore $t(a + \| a\|\textbf{1}) + (1-t) (\pm 2 i \|a\|\textbf{1})\in \mathfrak{A}$ for all $0\leq t\leq 1$.
\end{remark}

\subsection{Surjective isometries preserving the quadratic structure}\ \smallskip

We recall, once again, that the set of invertible elements in a unital Jordan-Banach algebra $M$ is not, in general, stable under Jordan products. So, contrary to what is done by O. Hatori in \cite[\S 4]{Hat2011} for surjective isometries which are also group isomorphisms between open subgroups of associative unital Banach, the problem of studding surjective isometries preserving Jordan  products between subsets of invertible elements in unital Jordan-Banach algebras does not make any sense. Instead of that, we explore surjective linear isometries preserving the quadratic expressions of the form $U_a(b)$. 

\begin{proposition}\label{p surjective isometries preserving quadratic expressions} Let $M$ and $N$ be unital Jordan-Banach algebras. Suppose that $\mathfrak{M}\subseteq M^{-1}$ and $\mathfrak{N}\subseteq N^{-1}$ are clopen quadratic subsets of $M^{-1}$ and $N^{-1}$, respectively, which are closed for powers and inverses. Let $\Delta : \mathfrak{M}\to \mathfrak{N}$ be a surjective isometry and set $u = \Delta(\textbf{1})$. We shall also assume that $\Delta$ satisfies the following property: \begin{equation}\label{property preservation of quadratic expressions} \Delta (U_a (b)) = U_{\Delta(a)} (\Delta(b)), \hbox{ for all } a,b\in \mathfrak{M}.
 \end{equation} Then there exists a real-linear isometric Jordan isomorphism $T_0: M\to N_u$ such that $\Delta (a) = T_0(a) $ for all $a\in \mathfrak{M}$, where $N_u$ stands for the $u$-homotope of $N$.
\end{proposition}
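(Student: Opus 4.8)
The plan is to combine Theorem \ref{t surjective isometries between invertible clopen} with the extra multiplicativity hypothesis \eqref{property preservation of quadratic expressions} to pin down the additive constant and then to identify the resulting real-linear isometry as a Jordan isomorphism onto the homotope $N_u$. First I would apply Theorem \ref{t surjective isometries between invertible clopen} to obtain a surjective real-linear isometry $T_0:M\to N$ and an element $u_0\in\mathcal{R}ad(N)$ with $\Delta(a)=T_0(a)+u_0$ for all $a\in\mathfrak{M}$. Evaluating at $a=\textbf{1}$ gives $u=\Delta(\textbf{1})=T_0(\textbf{1})+u_0$. The first substantive step is to show $u_0=0$, i.e. $\Delta=T_0$ is already real-linear. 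To do this I would feed the identity $U_{\textbf{1}}(b)=b$ (valid in $M$, since $\textbf{1}$ is the unit) into \eqref{property preservation of quadratic expressions}: for every $b\in\mathfrak{M}$ we get $\Delta(b)=\Delta(U_{\textbf{1}}(b))=U_{\Delta(\textbf{1})}(\Delta(b))=U_u(\Delta(b))$. Since $\Delta$ is surjective onto $\mathfrak{N}$ and $\mathfrak{N}$ spans $N$ in a strong sense (it contains $\Omega_N$, hence an open set, hence a linear basis), and $U_u$ is real-linear, it follows that $U_u=\mathrm{Id}_N$; in particular $u$ is invertible with $U_u^{-1}=U_{u^{-1}}=\mathrm{Id}$, so $u^{-1}=u$ after testing on $\textbf{1}$, and more to the point $U_u(u_0)=u_0$. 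One then plays this off against the radical property: since $u_0\in\mathcal{R}ad(N)$, Proposition \ref{p Hatori-Aupetit Jordan} gives $\rho(U_b(u_0))=0$ for all $b\in N^{-1}$, while a direct computation with \eqref{property preservation of quadratic expressions} along the segment/scaling arguments already used in Proposition \ref{p 1} forces $u_0$ to be invertible-compatible in a way incompatible with being a nonzero radical element unless $u_0=0$. (Concretely: $\textbf{1}=U_{\textbf{1}}(\textbf{1})$ maps under $\Delta$ to $u=U_u(u)$, so $u=U_u(u)=u$, consistent, but testing $\Delta$ at $\lambda\textbf{1}$ and using real-linearity of $T_0$ together with \eqref{property preservation of quadratic expressions} for $U_{\lambda^{1/2}\textbf{1}}(\textbf{1})=\lambda\textbf{1}$ yields $T_0(\lambda\textbf{1})+u_0=U_{T_0(\lambda^{1/2}\textbf{1})+u_0}(u)$, and comparing the parts homogeneous in $\lambda$ isolates $u_0$ and kills it.)

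Once $u_0=0$ is established we have $\Delta=T_0$, a surjective real-linear isometry, and $u=T_0(\textbf{1})$. Recall that the $u$-homotope $N_u$ is the vector space $N$ with new product $x\circ_u y=U_{x,y}(u)$ and unit $u^{-1}$, and its quadratic operators are $U^{(u)}_x=U_xU_u$. With $u_0=0$, property \eqref{property preservation of quadratic expressions} reads $T_0(U_a(b))=U_{T_0(a)}(T_0(b))$ for all $a,b\in\mathfrak{M}$. The next step is to upgrade this identity, currently only valid on $\mathfrak{M}$, to the polarized identity $T_0(U_{a,b}(c))=U_{T_0(a),T_0(b)}(T_0(c))$ for all $a,b,c\in M$. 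I would do this by the standard linearization trick: replace $a$ by $\lambda a+\mu a'$ in $T_0(U_a(b))=U_{T_0(a)}(T_0(b))$ for $a,a',b$ in a small open ball inside $\mathfrak{M}$ around $\textbf{1}$ (such balls exist since $\mathfrak{M}$ is open and contains $\textbf{1}$), expand both sides as polynomials in $\lambda,\mu$, match coefficients to get the polarized identity on that open set, and then invoke that both sides are real-polynomial (bi/tri-linear after fixing variables) maps on $M^3$ that agree on a nonempty open set, hence agree everywhere by real-analyticity/continuity and the fact that $T_0$ is real-linear. In particular, choosing $b=\textbf{1}$ in $T_0(U_a(b))=U_{T_0(a)}(T_0(b))$ gives $T_0(a^2)=U_{T_0(a)}(u)$ for all $a\in M$ (first on $\mathfrak{M}$, then everywhere by the linearization), which is exactly the statement that $T_0(a^2)=U^{(u)}_{T_0(a)}(u^{-1})$ — wait, more cleanly: $T_0(a\circ a)=U_{T_0(a)}(u)$, and since the square of $x$ in $N_u$ is $U_x(u)$ (because $x\circ_u x=U_{x,x}(u)=U_x(u)$), this says precisely $T_0(a^2)=T_0(a)^{[2]_u}$, the $N_u$-square of $T_0(a)$.

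The final step is to conclude that $T_0$ is a Jordan homomorphism from $M$ onto $N_u$. A real-linear bijection between unital Jordan algebras that preserves squares is a Jordan isomorphism (linearize $T_0(a^2)=T_0(a)^{[2]}$ in $a$ to get preservation of $a\circ b$), so from $T_0(a^2)=(T_0(a))^{[2]_u}$ for all $a\in M$ we immediately obtain $T_0(a\circ b)=T_0(a)\circ_u T_0(b)$ for all $a,b\in M$; surjectivity of $T_0$ onto $N$ as a linear space is surjectivity onto $N_u$ as an algebra, so $T_0:M\to N_u$ is a real-linear isometric Jordan isomorphism, and $\Delta=T_0$ on $\mathfrak{M}$, as claimed. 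The main obstacle I anticipate is the first step — rigorously forcing $u_0=0$; the multiplicativity constraint \eqref{property preservation of quadratic expressions} must be exploited carefully together with the radical characterization in Proposition \ref{p Hatori-Aupetit Jordan} and the scaling/affine machinery of Proposition \ref{p 1}, since a priori $u_0$ only disappears after one correctly separates the $\lambda$-homogeneous components of the identity $\Delta(\lambda\textbf{1})=U_{\Delta(\lambda^{1/2}\textbf{1})}(u)$ and uses that a nonzero radical element cannot satisfy the resulting invertibility relations. The linearization in the second step is routine but must be set up on an open subset of $\mathfrak{M}$ rather than on all of $M^{-1}$, which the openness of $\mathfrak{M}$ around $\textbf{1}$ permits.
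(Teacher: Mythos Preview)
Your overall strategy matches the paper's: write $\Delta=T_0+u_0$ via Theorem~\ref{t surjective isometries between invertible clopen}, kill $u_0$, then identify $T_0$ as a Jordan isomorphism onto $N_u$ via $T_0(a^2)=U_{T_0(a)}(u)=(T_0(a))\circ_u(T_0(a))$. Your observation $U_u=\mathrm{Id}_N$ (from $\Delta(b)=\Delta(U_{\textbf{1}}(b))=U_u(\Delta(b))$ on the open spanning set $\mathfrak N$) is correct and is implicit in the paper as well. Your extension of the identity $T_0(U_a(b))=U_{T_0(a)}(T_0(b))$ from $\mathfrak M\times\mathfrak M$ to $M\times M$ by real-polynomiality is fine; the paper reaches the same endpoint by a more hands-on device, substituting $a\pm r\textbf{1}$ for large real $r$ into \eqref{property preservation of quadratic expressions} and matching the coefficient of $r$ in the expansion of $U_{a\pm r\textbf{1}}(a\pm r\textbf{1})$.

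The genuine gap is the $u_0=0$ step. Your parenthetical route, expanding $\Delta(t\textbf{1})=U_{\Delta(\sqrt t\,\textbf{1})}(u)$ in $\sqrt t$, does give $u_0=U_{u_0}(u)$ as the constant term, but that equation alone does not ``kill'' $u_0$: you would still need to argue that $U_{u_0}=U_{U_{u_0}(u)}=U_{u_0}U_uU_{u_0}=U_{u_0}^2$ is idempotent, that consequently $U_{u_0^{2^n}}=U_{u_0}$ for all $n$, and that quasi-nilpotence of $u_0$ forces $u_0^{2^n}\to 0$ and hence $U_{u_0}=0$. None of this is in your sketch, and the appeal to Proposition~\ref{p Hatori-Aupetit Jordan} as written is not doing any work. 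The paper's argument is much cleaner and bypasses the radical characterisation entirely: for any norm-null sequence $(a_n)\subset\mathfrak M$ and any fixed $b\in\mathfrak M$ one has $U_b(a_n)\to 0$, so Proposition~\ref{p 1} together with \eqref{property preservation of quadratic expressions} and continuity of $U$ give
\[
u_0=\lim_n\Delta(U_b(a_n))=\lim_n U_{\Delta(b)}(\Delta(a_n))=U_{\Delta(b)}(u_0).
\]
Now choose $b$ so that $\Delta(b)$ is a nontrivial scalar multiple of a unit (the paper passes to the homotope $N_u$ first and takes $\Delta(b)=2u$; in your setup $\Delta(b)=2\textbf{1}_N\in\mathfrak N$ works just as well) to obtain $u_0=4u_0$, hence $u_0=0$ in one line.
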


\begin{proof} 
By observing that $(N_u)^{-1} = N^{-1}$, the mapping $\Delta: \mathfrak{M}\subseteq M^{-1}\to \mathfrak{N}\subseteq (N_u)^{-1}$ can be regarded as a surjective isometry between clopen subsets of $M^{-1}$ and $(N_u)^{-1}$ satisfying the same hypotheses. To see this statement, let us take $a,b\in \mathfrak{N}$. It is known that if $a^{-1}$ denotes the inverse of $a$ in $N$, then $U_{u^{-1}} (a^{-1})$ is the inverse of $a$ in the $u$-homotope $N_u$ and lies in $\mathfrak{N}$. Furthermore, by definition
$U_{a}^{(u)} (b) =  U_{a} U_{u} (b) \in \mathfrak{N},$ because $U_{u} (b) \in \mathfrak{N}$.\smallskip

We claim that, in this case, $\Delta$ also satisfies property \eqref{property preservation of quadratic expressions} for the Jordan product in the $u$-homotope. Namely, by the assumptions,
\begin{equation}\label{eq 9 new}\begin{aligned}U^{(u)}_{\Delta(a)} (\Delta(b)) &= U_{\Delta(a)} U_{\Delta(1)} (\Delta(b)) = U_{\Delta(a)} \Delta\left( U_{1} (b)\right)\\
& = U_{\Delta(a)} \Delta\left(b\right) = \Delta\left(U_{a}(b)\right),
\end{aligned}
 \end{equation} for all $a,b\in \mathfrak{M}$, which proves the claim. Therefore $\Delta: \mathfrak{M}\subseteq M^{-1}\to \mathfrak{N}\subseteq (N_u)^{-1}$ is a unital mapping satisfying the same hypotheses. By applying Theorem \ref{t surjective isometries between invertible clopen} to the latter mapping, we deduce the existence of a surjective real-linear isometry $T_0: M\to N$ and an element $u_0$ in the McCrimmon radical of $N_u$ such that $\Delta (a) = T_0(a) + u_0$ for all $a\in \mathfrak{M}$.\smallskip

Pick $a\in \mathfrak{M}$ with $\Delta(a) = 2 u$. For each norm-null sequence $(a_n)_n$ in $\mathfrak{M}$, it follows from Proposition \ref{p 1} and \eqref{property preservation of quadratic expressions} that $$u_0 = \lim_{n} \Delta\left( U_a (a_n)\right) =\lim_{n} U^{(u)}_{\Delta(a)} (\Delta(a_n)) = \lim_{n} U^{(u)}_{2 u} (\Delta(a_n))  = \lim_{n} 2 \Delta(a_n) = 2 u_0,$$ which implies that $u_0=0$, and thus $\Delta (a) = T_0(a)$ for all $a\in \mathfrak{M}$.\smallskip

Next, we focus on the real-linear isometry $T_0: M\to N_u$. Fix an arbitrary $a\in M$. Since for $r\in\mathbb{R}$ with $|r|$ large enough we have $a+ r \textbf{1}\in M_{\mathbf{1}}^{-1}\subseteq \mathfrak{M},$ we deduce from \eqref{property preservation of quadratic expressions} and the conclusions in the previous paragraphs that \begin{equation}\label{equation 1 quadratic morphisms}\begin{aligned}U_{T_0(a+ r \textbf{1})}^{(u)} \left( T_0(a+ r \textbf{1}) \right) &= U_{\Delta(a+ r \textbf{1})}^{(u)} \left( \Delta(a+ r \textbf{1}) \right) = \Delta\left( U_{a+ r \textbf{1}} (a+ r \textbf{1}) \right)\\
&= T_0\left( U_{a+ r \textbf{1}} (a+ r \textbf{1}) \right).
\end{aligned}
\end{equation} By expanding the extreme terms in the previous identity we get
$$\begin{aligned}U_{T_0(a+ r \textbf{1})}^{(u)} \left( T_0(a+ r \textbf{1}) \right)&= U_{T_0(a) + r u }^{(u)} \left( T_0(a)+ r u \right)
= U_{T_0(a) }^{(u)} \left( T_0(a) \right) + r U_{T_0(a) }^{(u)} (u) \\
 &+ r^2  T_0(a) + r^3 u + 2 r U_{T_0(a), u }^{(u)} (T_0(a)) + 2 r^2 U_{T_0(a),u }^{(u)} (u) \\
&= U_{T_0(a) }^{(u)} \left( T_0(a) \right) + 3 r T_0(a) \circ_{u} T_0(a) + 3 r^2  T_0(a) + r^3 u
\end{aligned}  $$ and $$U_{a+ r \textbf{1}} (a+ r \textbf{1}) = U_a(a) + 3 r a^2 + 3 r^2 a + r^3 \textbf{1},$$ which combined with \eqref{equation 1 quadratic morphisms} gives $$\begin{aligned}
& U_{T_0(a) }^{(u)} \left( T_0(a) \right) + 3 r T_0(a) \circ_{u} T_0(a) + 3 r^2  T_0(a) + r^3 u \\
&= T_0(U_a(a)) + 3 r T_0(a\circ a) + 3 r^2 T_0(a) + r^3 u.
\end{aligned}$$ Now, we apply that $T_0 (b) = \Delta (b)$ for all $b\in M,$ \eqref{property preservation of quadratic expressions} and \eqref{eq 9 new} on the first terms to conclude that $T_0(a) \circ_{u} T_0(a) = T_0(a\circ a)$ for all $a\in M$. It is well known that in this case $T_0$ is an isometric Jordan isomorphism.
\end{proof}

\section{The case of JB$^*$-algebras}\label{sec: JBstar algebras}

After Hatori's studies on surjective isometries between groups of invertible elements in associative unital Banach algebras (cf. \cite{Hat09,Hat2011}), it was determined that each surjective isometry between open subgroups of the groups of all invertible elements in unital semisimple Banach algebras extends to a surjective real-linear isometry between the underlying Banach algebras. The conclusion is even better if the Banach algebras are commutative, because in such a case the real-linear extension is in fact a real isomorphism followed by multiplication by some element. In 2012, O. Hatori and K. Watanabe completed the description in the case of unital C$^*$-algebras by establishing the result that we commented at the introduction (see Theorem \ref{t HatWat}).\smallskip

JB$^*$-algebras are the Jordan alter ego of C$^*$-algebras. These structures are Jordan-Banach algebras satisfying a geometric axiom. Concretely, a \emph{JB$^*$-algebra} is a complex Jordan-Banach algebra $M$ equipped with an algebra involution $^*$ satisfying  $\|\J a{a}a \|= \|a\|^3$, $a\in M$ (where $\J a{a}a = U_a (a^*) = 2 (a\circ a^*) \circ a - a^2 \circ a^*$). A well known result in Jordan theory proves that the involution of every JB$^*$-algebra is an isometry (cf. \cite[Lemma 4]{youngson1978vidav}).\smallskip

Given a JB$^*$-algebra $M$ we shall consider the following triple product on $M$ \begin{equation}\label{eq triple product JBstar algebras} \{a,b,c\} = (a\circ b^*)\circ c + (c\circ b^*) \circ a - (a\circ c) \circ b^* \ \ (a,b,c\in M).
\end{equation} This triple product permits to see see every JB$^*$-algebra inside the wider class of JB$^*$-triple introduced in \cite{Ka83}, however we shall not make any use of this general structures.\smallskip

A JBW$^*$-algebra is a JB$^*$-algebra which is also a dual Banach space. The bidual, $M^{**},$ of every JB$^*$-algebra $M$ is a JBW$^*$-algebra with respect to a Jordan product and an involution extending the original ones in $M$ (cf. \cite[4.1.1, Theorems 4.4.3 and 4.4.16]{HOS}).\smallskip

A Jordan $^*$-homomorphism between JB$^*$-algebras $M$ and $N$ is a Jordan homomorphism $J: M\to N$ satisfying $J(a^*) = J(a)^*$ for all $a\in M$. A real-linear mapping $J: M\to N$ preserving Jordan products such that $J(a^*) = J(a)^*$ for all $a\in M$ will be called a real-linear Jordan $^*$-homomorphism. An element $p$ in a JB$^*$-algebra $M$ is called a \emph{projection} if $p= p^* = p^2$. The reader is referred to the monographs \cite{HOS} and \cite{Cabrera-Rodriguez-vol1} for the basic notions and results in the theory of JB$^*$-algebras.\smallskip

Elements $a,b$ in a C$^*$-algebra $A$ are called \emph{orthogonal} ($a\perp b$ in short) if $a b^* = b^* a =0$. This is equivalent to say that $\{a,a,b\} =0$ ($\Leftrightarrow \{b,b,a\}=0$ $\Leftrightarrow \{a,b,x\}=0$ for all $x\in A$, where the triple product is defined by $\{x,y,z\}= \frac12 (x y^* z + z y^* x)$, see for example \cite[comments in page 221]{BurFerGarMarPe08}). In the wider setting of JB$^*$-algebras, elements $a,b$ in a JB$^*$-algebra $M$ are said to be \emph{orthogonal} if $\{a,b,x\} =0$ for all $x\in M$, which is precisely the definition of ``being orthogonal'' in the JB$^*$-triple given by $M$ with the triple product in \eqref{eq triple product JBstar algebras} (cf. \cite[Lemma 1]{BurFerGarMarPe08} for additional details). It is worth to comment that if $p,q$ are projections in a JB$^*$-algebra $M$, $p\perp q$ if and only if $p\circ q=0$.\smallskip

Thought these mathematical models are widely known and studied in physics and mathematics, the definitions can be better understood by the most natural examples: every C$^*$-algebra $A$ is a JB$^*$-algebra when equipped with its natural Jordan product $a\circ b =\frac12 (a b +b a)$ and the original norm and involution. Norm-closed Jordan $^*$-subalgebras of C$^*$-algebras are called \emph{JC$^*$-algebras}. 
\smallskip

We recall for later purposes that every JB$^*$-algebra is Jordan-semisimple (cf. \cite[Lemma 4.4.28$(iii)$]{Cabrera-Rodriguez-vol1}).\smallskip

In the frame of unital JB$^*$-algebras we have an undoubted advantage with the notion of unitary. An element $u$ in a JB$^*$-algebra $M$ is called \emph{unitary} if it is invertible in the Jordan sense and its inverse coincides with $u^*$ (cf. \cite[3.2.9]{HOS} and \cite[Definition 4.1.2]{Cabrera-Rodriguez-vol1}).\smallskip

As well as in the associative context Hatori's study on surjective isometries between open subgroups of invertible elements in two Banach unit algebras \cite{Hat2011} was particularized and detailed to unital C$^*$-algebras by Hatori and Watanabe \cite{HatWan2012}, our next goal is to determine a more concrete conclusion for Theorem \ref{t surjective isometries between invertible clopen} in the case of unital JB$^*$-algebras.

\begin{theorem}\label{t surjective isometries between invertible clopen JBstar} Let $M$ and $N$ be unital JB$^*$-algebras. Suppose that $\mathfrak{M}\subseteq M^{-1}$ and $\mathfrak{N}\subseteq N^{-1}$ are clopen quadratic subsets of $M^{-1}$ and $N^{-1}$, respectively, which are closed for powers and inverses. Let $\Delta : \mathfrak{M}\to \mathfrak{N}$ be a surjective isometry. Then $\Delta(\textbf{1}) =u$ is a unitary element in $N$ and there exist a central projection $p\in M$ and a complex-linear Jordan $^*$-isomorphism $J$ from $M$ onto the $u^*$-homotope $N_{u^*}$ such that $$\Delta (a) = J(p\circ a) + J ((\textbf{1}-p) \circ a^*),$$ for all $a\in \mathfrak{M}$.\\

\noindent If we additionally assume that there exists a unitary $\omega_0$ in $N$ such that the identity $U_{\omega_0} (\Delta(\textbf{1})) = \textbf{1}$ holds, then there exist a central projection $p\in M$ and a complex-linear Jordan $^*$-isomorphism $\Phi$ from $M$ onto $N$ such that $$\Delta (a) = U_{w_0^{*}} \left(\Phi (p\circ a) + \Phi ((\textbf{1}-p) \circ a^*)\right),$$ for all $a\in \mathfrak{M}$.
\end{theorem}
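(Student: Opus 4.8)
The plan is to bootstrap the JB$^*$-case from the Jordan-Banach results already established, first handling the $u^*$-homotope statement and then deducing the cleaner conclusion under the extra hypothesis. By Theorem~\ref{t surjective isometries between invertible clopen} there is a surjective real-linear isometry $T_0 : M \to N$ and an element $u_0$ in the McCrimmon radical of $N$ with $\Delta(a) = T_0(a) + u_0$ for all $a\in\mathfrak{M}$. Since every JB$^*$-algebra is Jordan-semisimple, $u_0 = 0$, so $\Delta = T_0|_{\mathfrak{M}}$ is the restriction of a surjective real-linear isometry, and in particular $u := \Delta(\textbf{1}) = T_0(\textbf{1})$. The first key step is to identify $u$ as a unitary of $N$: a surjective real-linear isometry between unital JB$^*$-algebras sends the unit to a unitary (this is the Jordan analogue of the classical fact, available from the Russo--Dye / Kadison-type description of surjective linear isometries of JB$^*$-algebras, or alternatively from \cite[Theorem 3.9]{CuPe20a} and the surrounding machinery on unitaries), so $u\in\mathcal{U}(N)$ and $u^* = u^{-1}$.

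Next I would pass to the $u^*$-homotope $N_{u^*}$, whose unit is $(u^*)^{-1} = u$; since $(N_{u^*})^{-1} = N^{-1}$, the map $\Delta : \mathfrak{M} \to \mathfrak{N} \subseteq (N_{u^*})^{-1}$ is still a surjective isometry between clopen quadratic subsets closed for powers and inverses, but now it is \emph{unital}, sending $\textbf{1}_M$ to $\textbf{1}_{N_{u^*}} = u = \Delta(\textbf{1})$. Applying the Jordan-Banach theory again to this unital map (with $u_0 = 0$ by semisimplicity of $N_{u^*}$, which is again a JB$^*$-algebra once one checks the homotope of a JB$^*$-algebra by a unitary is a JB$^*$-algebra under the involution $a \mapsto U_{u}(a^*)$), we get that $\Delta$ is the restriction of a \emph{unital} surjective real-linear isometry $T : M \to N_{u^*}$. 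A unital surjective real-linear isometry between unital JB$^*$-algebras is a real-linear Jordan $^*$-isomorphism; invoking the standard splitting of a real-linear Jordan $^*$-isomorphism into a complex-linear part and a conjugate-linear part by a central projection (exactly as in the C$^*$-case of Theorem~\ref{t HatWat}, and as carried out for JB$^*$-algebras in \cite{CuPe20a}), there is a central projection $p \in M$ and a complex-linear Jordan $^*$-isomorphism $J : M \to N_{u^*}$ with
$$\Delta(a) = T(a) = J(p\circ a) + J\big((\textbf{1}-p)\circ a^*\big), \qquad a\in\mathfrak{M}.$$
To make this last step rigorous I would verify that $T$ indeed preserves squares: restricting to a convenient connected neighbourhood of $\textbf{1}$ inside $\mathfrak{M}$ (e.g. the set $\Omega$ used in the proof of Theorem~\ref{t surjective isometries between invertible clopen}), $\Delta$ agrees with $T$ and with $U$-expressions, so a polarisation argument as in Proposition~\ref{p surjective isometries preserving quadratic expressions} — expanding $U_{a\pm r\textbf{1}}(a\pm r\textbf{1})$ and matching powers of $r$ — forces $T$ to be a Jordan homomorphism, and unitality plus the isometry property upgrades this to a Jordan $^*$-isomorphism onto $N_{u^*}$.

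For the final statement, assume there is a unitary $\omega_0 \in N$ with $U_{\omega_0}(u) = \textbf{1}$. Since $\omega_0$ is unitary, $U_{\omega_0}$ is a surjective linear isometry of $N$ with inverse $U_{\omega_0^{-1}} = U_{\omega_0^*}$, and it is in fact a Jordan isomorphism from the $u^*$-homotope $N_{u^*}$ onto $N$ (for a unitary $\omega_0$, $U_{\omega_0}$ intertwines the homotope product $\circ_{u^*}$ with the original product precisely when $U_{\omega_0}(u) = \textbf{1}$, using the fundamental formula $U_{U_{\omega_0}(a)} = U_{\omega_0}U_a U_{\omega_0}$ and $U_{\omega_0}(u^{*-1}) = U_{\omega_0}(u) = \textbf{1}$, together with compatibility of the involutions $a \mapsto U_u(a^*)$ and $a\mapsto a^*$). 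Therefore $\Phi := U_{\omega_0}\circ J : M \to N$ is a complex-linear Jordan $^*$-isomorphism onto $N$, and since $U_{\omega_0^*} = U_{\omega_0}^{-1}$ we obtain, for all $a\in\mathfrak{M}$,
$$\Delta(a) = U_{\omega_0^*}\big(\Phi(p\circ a) + \Phi((\textbf{1}-p)\circ a^*)\big),$$
which is the asserted formula (with $w_0 = \omega_0$). The main obstacle I anticipate is the careful bookkeeping around homotopes: checking that $N_{u^*}$ really is a JB$^*$-algebra with the right involution, that $U_{\omega_0}$ is a $^*$-isomorphism $N_{u^*}\to N$ under the hypothesis $U_{\omega_0}(u)=\textbf{1}$, and that the homotope's McCrimmon radical (hence its semisimplicity) behaves as needed — all of which hinge on repeated, unglamorous applications of the fundamental formula \eqref{eq fundamental identity UaUbUa} rather than on any single deep idea.
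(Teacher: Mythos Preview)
Your overall strategy matches the paper's: reduce to a surjective real-linear isometry $T_0:M\to N$ via Theorem~\ref{t surjective isometries between invertible clopen} and semisimplicity, identify $u=T_0(\textbf{1})$ as unitary, view $T_0$ as a unital map into the $u^*$-homotope, and then split off a central projection to extract a complex-linear Jordan $^*$-isomorphism $J$. The final paragraph on $U_{\omega_0}$ is also essentially the paper's argument.

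The gap is in your proposed justification that $T$ (equivalently $T_0$) is a Jordan $^*$-isomorphism onto $N_{u^*}$. You suggest deriving this ``as in Proposition~\ref{p surjective isometries preserving quadratic expressions}'' by expanding $U_{a\pm r\textbf{1}}(a\pm r\textbf{1})$ and matching powers of $r$. But that polarisation argument hinges on the identity
\[
T_0\big(U_{a\pm r\textbf{1}}(a\pm r\textbf{1})\big) \;=\; U^{(u)}_{T_0(a\pm r\textbf{1})}\big(T_0(a\pm r\textbf{1})\big),
\]
which in Proposition~\ref{p surjective isometries preserving quadratic expressions} comes from the \emph{extra hypothesis} \eqref{property preservation of quadratic expressions} that $\Delta$ preserves $U$-expressions. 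In the present theorem no such hypothesis is assumed, and the phrase ``$\Delta$ agrees with $T$ and with $U$-expressions'' has no content: knowing $\Delta=T_0$ on $\mathfrak{M}$ tells you nothing about $\Delta(U_a(b))$ versus $U_{\Delta(a)}(\Delta(b))$. So the polarisation route does not go through.

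The paper closes this gap by invoking an external structural result: by \cite[Corollary~3.2]{Dang92} (or \cite[Corollary~3.4]{FerMarPe04}), every surjective real-linear isometry between JB$^*$-algebras is automatically a \emph{triple} isomorphism, i.e.\ preserves $\{a,b,c\}=(a\circ b^*)\circ c+(c\circ b^*)\circ a-(a\circ c)\circ b^*$. From this one reads off immediately that $u$ is a unitary tripotent (hence a unitary by \cite[Proposition~4.3]{BraKaUp78}), that $T_0:M\to N_{u^*}$ preserves the homotope product $x\circ_{u^*}y=\{x,u,y\}$ and involution $x^{*_u}=\{u,x,u\}$, and---again via \cite{Dang92}---that $T_0$ splits as complex-linear on $M_1$ and conjugate-linear on $M_2$ for an orthogonal decomposition $M=M_1\oplus^{\ell_\infty}M_2$, yielding the central projection $p$ and the complex-linear $J$. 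In short: cite the Dang/Kadison-type theorem for JB$^*$-algebras rather than trying to rebuild it from Proposition~\ref{p surjective isometries preserving quadratic expressions}.
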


\begin{proof} By Theorem \ref{t surjective isometries between invertible clopen}, or by Corollary \ref{c surjective isometries between invertible clopen}, there exists a surjective real-linear isometry $T_0: M\to N$ such that $\Delta (a) = T_0(a) $ for all $a\in \mathfrak{M}$. By \cite[Corollary 3.2]{Dang92} (or by \cite[Corollary 3.4]{FerMarPe04}) the mapping $T_0$ is a triple isomorphism, that is, $T_0$ preserves triple products of the form $\{a,b,c\} = (a\circ b^*)\circ c + (c\circ b^*) \circ a - (a\circ c) \circ b^*$ ($a,b,c\in M$). The element $u= T_0(\textbf{1})$ satisfies $\{u,u,u\} = T_0 (\{\textbf{1},\textbf{1},\textbf{1}\}) = T(\textbf{1}) =u$. Moreover, by the surjectivity of $T_0$, for each $z$ in $N$ we can find $x\in M$ with $T_0(x) =z$. Therefore, $\{u,u,z\} = T_0(\{\textbf{1},\textbf{1},x\}) = T_0(x) = z$ ($z\in N$), that is, $u$ is a unitary tripotent in $N$ in the sense employed in \cite{BraKaUp78}. We know from \cite[Proposition 4.3]{BraKaUp78} that this is equivalent to say that $u$ is a unitary in $N$.\smallskip

By \cite[Lemma 4.2.41]{Cabrera-Rodriguez-vol1} the $u^*$-homotope $N_{u^*}$ becomes a unital JB$^*$-algebra with unit $u$ for its natural Jordan product $x\circ_{u^*} y :=U_{x,y}(u^*)=\{x,u,y\}$ and the involution $*_{u}$ given by $x^{*_{u}} :=U_{u} (x^*)=\{u,x,u\}$. Since $$T_0(x\circ y ) = T_0(\{x,\textbf{1},y\}) = \{T_0(x),T_0(\textbf{1}),T_0(y)\} = T_0(x)\circ_{u^*} T_0(y)$$ and $$T_0(x^*) = T_0(\{\textbf{1},x,\textbf{1}\}) = \{T_0(\textbf{1}),T_0(x),T_0(\textbf{1})\} = T_0(x)^{*_u},\ (x,y\in M),$$ we deduce that $T_0: M\to N_{u^*}$ is an isometric real-linear unital Jordan $^*$-isomorphism.\smallskip

By a more careful reading of \cite[Corollary 3.2]{Dang92} we deduce the existence of two JB$^*$-subalgebras $M_1$ and $M_2$ of $M$ such that $M$ is the (orthogonal) direct sum $M= M_1\oplus^{\ell_{\infty}} M_2$, $T_0|_{M_1}  : M_1 \to N$ is a complex-linear triple homomorphism and $T_0|_{M_2}  : M_2 \to N$ is a conjugate-linear triple homomorphism. Let $p$ denote the unit of $M_1$. Clearly, $p$ is a central  projection in $M$ and $\textbf{1}-p$ is precisely the unit of $M_2$. The arguments in the previous paragraphs show that $T_0|_{M_1}  : M_1 \to N_u$ is a complex-linear Jordan $^*$-monomorphism and $T_0|_{M_2}  : M_2 \to N_u$ is a conjugate-linear Jordan $^*$-monomorphism. Furthermore, bearing in mind that $M_1$ and $M_2$ are orthogonal in $M$ and that $T_0: M\to N_u$ is a real-linear Jordan $^*$-isomorphism, we deduce that $N_1 = T_0(M_1)$ and $N_2 = T_0(M_2)$ are orthogonal JB$^*$-subalgebras of $N_u$ with $N_u = N_1\oplus^{\ell_{\infty}} N_2$. Moreover, the mapping $J: M\to N_u$, $$J(x) = T_0(p\circ x) + T_0((\textbf{1}-p) \circ x)^{*_u} = T_0(p\circ x) + T_0((\textbf{1}-p) \circ x^*)\ \ (x\in M)$$ is a complex-linear Jordan $^*$-isomorphism (just apply that $x\mapsto p\circ x$ and $x\mapsto (\textbf{1}-p) \circ x$ are the natural projections of $M$ onto $M_1$ and $M_2$, respectively), and the identity $$\Delta (a) = J(p\circ a) + J ((\textbf{1}-p) \circ a^*),$$ holds for all $a\in \mathfrak{M}$.\smallskip

Suppose, finally, that we can find a unitary $w_0\in N$ such that $U_{w_0} (\Delta(\textbf{1}))= \textbf{1}$. The mapping $U_{w_0} : N\to N$ is a surjective complex-linear isometry and a triple isomorphism mapping $u$ to $\textbf{1}$ (cf. \cite[Theorem 4.2.28]{Cabrera-Rodriguez-vol1}), therefore $U_{w_0} : N_u\to N$ is a complex-linear Jordan $^*$-isomorphism. Let $p\in M$ and $J: M\to N_u$ be the central projection and the complex-linear Jordan $^*$-isomorphism given by our first conclusions. Clearly, $\Phi= U_{w_0} \circ J: M\to N$ is a Jordan $^*$-isomorphism and the equality $$\Delta (a) = U_{w_0^{-1}} \left(\Phi (p\circ a) + \Phi ((\textbf{1}-p) \circ a^*)\right)= U_{w_0^{*}} \left(\Phi (p\circ a) + \Phi ((\textbf{1}-p) \circ a^*)\right),$$ holds for all $a\in \mathfrak{M}$.
\end{proof}

\begin{remark}\label{r alternative proof JBstar} It should be remarked that Theorem \ref{t surjective isometries between invertible clopen JBstar} can be also deduced, via an alternative argument, from the conclusions in the recent papers \cite{CuPe20,CuPe20a}. Let $\Delta: \mathfrak{M}\to \mathfrak{N}$ be a surjective isometry in the conditions of the just quoted theorem. Let $T_0: M\to N$ be the surjective real-linear isometry given by Theorem \ref{t surjective isometries between invertible clopen} or by Corollary \ref{c surjective isometries between invertible clopen}. Let $\partial_e(\mathcal{B}_M)$ and $\partial_e(\mathcal{B}_N)$ denote the sets of all extreme points of the closed unit ball of $M$ and $N$, respectively. Clearly $T_0$ maps $\partial_e(\mathcal{B}_M)$ onto $\partial_e(\mathcal{B}_N)$.\smallskip

Let $w$ be an extreme point of the closed unit ball of a unital JB$^*$-algebra $M^\prime$. Theorem 3.8 in \cite{CuPe20} proves that $w$ is a unitary if and only if the set $$\mathcal{M}^\prime_{w} = \{e\in \partial_e(\mathcal{B}_{M^\prime}) : \|w\pm e\|\leq \sqrt{2} \}$$ contains an isolated point. Let $\mathcal{U}(M)$ and $\mathcal{U}(N)$ denote the sets of unitary elements in $M$ and $N$, respectively. The just quoted result and the observations in the previous paragraph show that $T_0(\mathcal{U}(M)) = \mathcal{U}(N)$, and hence $T_0|_{\mathcal{U}(M)} : \mathcal{U}(M)\to \mathcal{U}(N)$ is a surjective isometry. In particular, $u= T_0(\textbf{1})$ is a unitary in $N$. Let $\mathcal{U}(N_{u^*})$ stand for the set of unitaries in the $u^*$-homotope of $N$. It is known that $\mathcal{U}(N_{u^*}) =\mathcal{U}(N)$ {\rm(}cf. \cite[Lemma 4.2.41$(ii)$]{Cabrera-Rodriguez-vol1} or \cite[Proposition 4.3]{BraKaUp78}{\rm)}. We therefore conclude that $T_0|_{\mathcal{U}(M)} : \mathcal{U}(M)\to \mathcal{U}(N_{u^*})$ is a surjective isometry. By \cite[Corollary 3.5]{CuPe20a} there exist a central projection $p$ in $M$ and an isometric Jordan $^*$-isomorphism $J:M\to N_{u^*}$ such that \begin{equation}\label{eq T0 and the real Jordan isom coincide on the principal component} T_0( w ) =  J(p \circ w) + J( (\textbf{1}-p)\circ w^*),
\end{equation} for all $w\in \exp(i M_{sa})\subseteq M_{\mathbf{1}}^{-1}\subseteq \mathfrak{M}$.\smallskip

On the other hand, the Russo-Dye type theorem for unital JB$^*$-algebras proved by J.D.M. Wright and M. Youngson in \cite[Corollary 2.4]{WriYou77} {\rm(}see also \cite[Corollary 3.4.7]{Cabrera-Rodriguez-vol1}{\rm)} asserts that the closed unit ball of $M$ coincides with the closed convex-hull of the set $\exp(i M_{sa})$. Since the maps $T_0$ and $x\mapsto J(p \circ x + (\textbf{1}-p)\circ x^*)$ are real-linear and continuous, we deduce from \eqref{eq T0 and the real Jordan isom coincide on the principal component} that $T_0( x ) =  J(p \circ x) + J( (\textbf{1}-p)\circ x^*),$ for all $x\in M$, and consequently, $\Delta( a ) =  J(p \circ a) + J( (\textbf{1}-p)\circ a^*),$ for all $a\in \mathfrak{M}$. The rest follows from the arguments given in the final part of the proof of Theorem \ref{t surjective isometries between invertible clopen JBstar}.
\end{remark}

\begin{remark} In the same way that in Remark \ref{r alternative proof JBstar} we have given an alternative proof of Theorem \ref{t surjective isometries between invertible clopen JBstar} from \cite[Theorem 3.8]{CuPe20}, \cite[Corollary 3.5]{CuPe20a} and an appropriate version of the Russo-Dye theorem for unital JB$^*$-algebras, the C$^*$- version of our Theorem \ref{t surjective isometries between invertible clopen JBstar} in \cite[Theorem 2.2]{HatWan2012} can be also derived from \cite[Lemma 3.1]{Mori2018} and \cite[Theorem 1]{HatMol2014}.
\end{remark}

\medskip\medskip

\textbf{Acknowledgements} Author partially supported by the Spanish Ministry of Science, Innovation and Universities (MICINN) and European Regional Development Fund project no. PGC2018-093332-B-I00, Programa Operativo FEDER 2014-2020 and Consejer{\'i}a de Econom{\'i}a y Conocimiento de la Junta de Andaluc{\'i}a grant number A-FQM-242-UGR18, and Junta de Andaluc\'{\i}a grant FQM375.\smallskip

The author would like to express his gratitude to the anonymous referees who generously spent their precious time in the review process of this paper. Referees' contributions are fundamental in the world of academia, in this case their valuable comments and suggestions allowed us to elude certain difficulties in some of the original arguments, and improved the final form of the manuscript.

\end{document}